\documentclass{article}
\usepackage[english]{babel}
\usepackage[margin=36mm]{geometry}

\usepackage{amsmath}
\usepackage{mathtools}
\usepackage{graphicx}
\usepackage{amssymb}
\usepackage[colorlinks=true, allcolors=blue]{hyperref}
\usepackage{amsthm}
\usepackage{enumitem}
\usepackage{tikz}
\usepackage{tikz-cd}
\usepackage{todonotes}
\usepackage{bbm}

\usetikzlibrary{arrows.meta,
                decorations.markings, 
                 positioning}
\usetikzlibrary{topaths,calc}
\usepackage{pgfplots}
\pgfplotsset{compat=1.18}
\usepackage{adjustbox}
\usepackage{subcaption} 
\usepackage{caption} 
\usepackage{floatrow}
\usepackage{nicematrix}
\usepackage{algpseudocode}
\usepackage{appendix}
\usepackage{comment}

\usepackage{bbm}
\usepackage{bm}

\usepackage[capitalise]{cleveref}
\usepackage{placeins}
\usepackage{cases}

\usepackage{ifthen}

\usepackage{float}

\newcommand{\Keywords}[1]{\vspace{3mm}\par\noindent\hspace*{10mm}
\parbox{140mm}{\small {\textbf {\textit{Key words}}:} \rm #1}\par}

\newcommand{\Email}[1]{\ifthenelse{\equal{#1}{}}{}{\par\noindent {\rm E-mail: }{\it  #1} \par}}
\newcommand{\EmailMarked}[1]{\ifthenelse{\equal{#1}{}}{}{\par\noindent $^*$~{\rm E-mail: }{\it  #1} \par}}
\newcommand{\URLaddress}[1]{\ifthenelse{\equal{#1}{}}{}{\par\noindent {\rm URL: }{\tt  #1} \par}}
\newcommand{\URLaddressMarked}[1]{\ifthenelse{\equal{#1}{}}{}{\par\noindent $^*$~{\rm URL: }{\tt  #1} \par}}
\newcommand{\EmailD}[1]{\ifthenelse{\equal{#1}{}}{}{\par\noindent {$\phantom{^{{\rm a)}}}$~\rm E-mail: }{\it  #1} \par}}
\newcommand{\EmailDD}[1]{\ifthenelse{\equal{#1}{}}{}{\par\noindent {$\phantom{{}^{\dag^1}}$~\rm E-mail: }{\it  #1} \par}}

\newtheorem{theorem}{Theorem}[section]
\newtheorem{lemma}{Lemma}[section]
\newtheorem{proposition}{Proposition}[section]

\newtheorem{definition}{Definition}[section]
\newtheorem{example}{Example}[section]
\newtheorem{remark}{Remark}[section]

\newcommand{\LL}{\mathcal{L}}

\newcommand{\im}{\mathrm{im}}

\newcommand{\tr}{\operatorname{tr}}
\newcommand{\SC}{\operatorname{SC}}
\newcommand{\loops}{\operatorname{loop}}
\newcommand{\art}{\operatorname{art}}
\newcommand{\ic}{\operatorname{chord}_{\operatorname{iso}}}

\newcommand{\Ber}{\operatorname{Bernoulli}}
\newcommand{\up}{\operatorname{up}}
\newcommand{\lap}{\operatorname{Lap}}
\newcommand{\GS}{\operatorname{GS}}

\newcommand{\EE}{\mathbb{E}}

\newcommand{\Var}{\mathbbm{Var}}
\newcommand{\Cov}{\mathbbm{Cov}}
\newcommand{\NCev}{\mathcal{NC}_{\operatorname{even}}}
\newcommand{\NC}{\mathcal{NC}}

\usepackage{authblk}
\date{} 

\title{Eigenvalue Distribution of the Laplacian on Random Complexes}
\author[1]{Rui Dong}

\begin{document}
\maketitle
\Email{\href{mailto:ruidong.research@gmail.com}{ruidong.research@gmail.com}}

\begin{abstract}
We study the empirical eigenvalue distribution of a standardized up Laplacian of the Linial-Meshulam model $Y_{q+1}(n, p)$.
We first give a definition of a Gaussian--semicircle law: $\mathcal{N}(0, \sigma^2)\boxplus \SC(s\sigma^2)$,
and give a combinatorial formula of its moments in terms of pairing partitions.
In addition to that,
we also prove that the limiting empirical eigenvalue distribution of this  standardized up Laplacian follows a Gaussian--semicircle law,
in the sense of almost surely weak convergence.
\end{abstract}
\Keywords{Random complexes, random matrix theory, free probability, \\Johnson graph, combinatorial graph theory}

\tableofcontents
\section{Introduction}

Random simplicial complexes provide a natural higher-dimensional generalization of random graphs.
Among the most fundamental models is the Linial--Meshulam random complex,
introduced by Linial and Meshulam \cite{MR2260850} in dimension two and subsequently generalized by Meshulam and Wallach \cite{MR2504405} to arbitrary dimensions.
In the Linial--Meshulam model $Y_{q+1}(n,p)$,
the complete $q$-dimensional skeleton is present and each $(q+1)$-simplex is included independently with probability $p$. 
Since its introduction,
the model has been studied extensively from both topological and spectral perspectives,
with particular attention to homological phase transitions, 
expansion,
and the spectra of higher-dimensional Laplacians.

From the viewpoint of random matrix theory, 
the eigenvalue distribution of random graphs provides an important point of departure. 
For an Erd\H{o}s--R\'enyi graph $G(n,p)$,
the \emph{empirical eigenvalue distribution} (EED) of the centered and scaled adjacency matrix converges to Wigner's semicircle law, 
whereas Ding and Jiang \cite{MR2759729} showed that a suitably normalized EED of the centered Laplacian converges to the free convolution of a Gaussian distribution and the semicircle law. 
The appearance of the Gaussian component in the Laplacian case is naturally associated with fluctuations of the random degrees.
This Gaussian--semicircle free convolution phenomenon suggests a corresponding question for higher-dimensional Laplacians: 
does the centered up-Laplacian of a random simplicial complex exhibit an analogous limiting eigenvalue distribution?

A substantial spectral theory has been developed for the Linial--Meshulam model.
Gundert and Wagner \cite{MR3557457} studied higher-dimensional analogues of the normalized Laplacian and adjacency matrix and established concentration of their eigenvalues in the regime $p=\Omega(\log n/n)$. 
Knowles and Rosenthal \cite{MR3689342} subsequently studied the adjacency matrix of $Y_{q+1}(n,p)$. 
Under the assumption
$$
np(1-p)\gg \log^4 n,
$$
they proved eigenvalue confinement and showed that the EED of the appropriately centered and scaled adjacency matrix converges to the semicircle law. These results establish a close connection between the spectral theory of random simplicial complexes and that of random matrix models with dependent entries.

The EED of higher-dimensional Laplacians has also been considered directly.
Kanazawa \cite{MR4340474} developed a general framework for homogeneous and spatially independent random simplicial complexes and proved convergence of the EEDs of their Laplacians,
with the Linial--Meshulam model appearing as a special case. 
The argument is based on local weak convergence and, 
in the Linial--Meshulam setting, 
is formulated in the sparse regime in which the expected number of $(q+1)$-simplices incident to a fixed $q$-simplex remains of constant order. 
Thus,
although convergence of Laplacian EEDs is known in sparse random-complex regimes, 
this result does not identify the fluctuation spectrum of the centered up-Laplacian in the dense regime with fixed $p\in (0,1)$.

In this paper,
we study precisely this dense-regime fluctuation problem.
Let $L_q^{\mathrm{up}}$ denote the unnormalized up-Laplacian of $Y_{q+1}(n,p)$.
Since
$$
\mathbb{E}L_q^{\mathrm{up}}=pB_{q+1}B_{q+1}^{\top},
$$
and the boundary matrix of the complete complex satisfies
$$
B_{q+1}B_{q+1}^{\top}=nP_q,
$$
where $P_q$ is the orthogonal projection onto $\im(B_{q+1})$,
the natural centered and normalized fluctuation matrix is
$$
W_n
=
\frac{L_q^{\mathrm{up}}-\EE[L_q^{\mathrm{up}}]}{\sqrt n}.
$$
We remove the deterministic zero eigenspace of $W_n$ and consider the EED on the image of $P_q$.

We exhibit two main results in \cref{sec:main_result}.
In \cref{sec:result_moments} we define a Gaussian--semicircle law
\[
\mu_{\GS}=\mathcal{N}(0, \sigma^2)\boxplus \SC(s\sigma^2),
\]
here $\boxplus$ denotes the free convolution.
We give a combinatorial representation of the moments of $\mu_{\GS}$.
More specifically,
let $m_k=\int_{\mathbb{R}}x^k d\mu_{\GS}(x)$ denote the $k$-th moment of $\mu_{\GS}$,
then all odd moments vanish,
while for $k=2r$,
$$
m_{2r}
=
\sigma^{2r}
\sum_{\pi\in\mathcal P_2(2r)}
(s+1)^{\#\ic(\pi)}.
$$
This representation gives a direct combinatorial description of the Gaussian--semicircle law $\mu_{\GS}$
in terms of pairing partitions $\mathcal{P}_2(2r)$, 
complementing the usual description through non-crossing partitions.
The weight in this formula is determined by the number of isolated chords in the chord diagrams of each pairing partition $\pi$,
which is denoted by $\#\ic(\pi)$.

Our second result as presented in \cref{sec:result_EED} identifies the limiting EED of the fluctuation $W_n$.
For fixed $q\geq 0$ and $p\in(0,1)$,
let $\eta_2$ be the variance of $\Ber(p)$.
We prove that the EED of $W_n$ converges almost surely weakly to a Gaussian--semicircle law
$$
\mu_{\mathrm{Lap}}
:=
\mathcal N\bigl(0,\eta_2\bigr)
\boxplus
\mathrm{SC}\bigl((q+1)\eta_2\bigr).
$$
Thus the higher-dimensional up-Laplacian exhibits the same qualitative Gaussian--semicircle free convolution phenomenon as the Laplacian of an Erd\H{o}s--R\'enyi graph, while the variance of the semicircular component is multiplied by $q+1$.
When $q=0$,
the theorem reduces to the corresponding result of Ding and Jiang \cite{MR2759729} for Erd\H{o}s--R\'enyi graphs. For $q\geq1$,
the theorem gives an explicit limiting law for the centered fluctuation spectrum of the higher-dimensional up-Laplacian in the dense Linial--Meshulam regime.

This result can also be viewed as exhibiting a higher-dimensional analogue of the decomposition observed for graph Laplacians.
More specifically,
For an Erd\H{o}s-R\'enyi graph,
the Laplacian can be decomposed as the difference between diagonal degree matrix $D_0$ and adjacency matrix $A_0$:
\[
L_0=D_0-A_0.
\]
After centering and normalizing,
\[
\frac{L_0-\EE[L_0]}{\sqrt{n}}=\frac{D-\EE[D_0]}{\sqrt{n}} - \frac{A_0-\EE[A_0]}{\sqrt{n}}.
\]
There are therefore two fluctuation mechanisms, 
i.e.,
\[
\boxed{
\textrm{degree fluctuation} + \textrm{adjacency fluctuation}.
}
\]
The degree fluctuation $\frac{D_0-\EE[D_0]}{\sqrt{n}}$ is a diagonal matrix whose entries are sums of multiple independent Bernoulli variable divided by $\sqrt{n}$,
its EED has a Gaussian-type limit.
Meanwhile,
the adjacency fluctuation $\frac{A_0-\EE[A_0]}{\sqrt{n}}$ is a scaled random symmetric matrix, 
whose entries are independent up to symmetry.
Hence its EED has a semicircle-type limit.
The remarkable fact due to Ding and Jiang \cite{MR2759729} is that when these two fluctuations are combined,
the EED limit of $\frac{L_0-\EE[L_0]}{\sqrt{n}}$ is the Gaussian--semicircle law
\[\mathcal{N}(0, \sigma^2)\boxplus \SC(\sigma^2).
\]
The similar decomposition
\[
L_q^{\up}=D_q^{\up}-A_q^{\up} 
\]
holds in higher dimensions,
with $D_q^{\up}$ the upper-degree matrix,
and $A_q^{\up}$ the signed upper-adjacency matrix.
We assert in \cref{sec:result_EED} that even though all the entries in $W_n$ are strongly dependent on each other,
a similar Gaussian--semicircle phenomenon holds for the higher-dimensions as well.
Instead of treating these two components as independent,
we work directly with 
\[
W_n=\frac{1}{\sqrt{n}}\sum_{\tau\in \Delta_{q+1}}(\xi_\tau-p)b_\tau b_\tau^\top,
\]
here $\Delta_{q+1}$ is the set of all $(q+1)$-simplices in $\LL_n$,
 and $b_\tau$ denotes the boundary map associated with $\tau$.
Our result in \cref{sec:result_EED} reads that 
\[
\boxed{
\begin{aligned}
    \textrm{Gaussian variance}&=\eta_2;\\
    \textrm{semicircle variance}&=(q+1)\eta_2.
\end{aligned}
}
\]
Thus the Gaussian component is independent of $q$,
while the semicircular component carries the dimensional dependence through $q+1$.

The remainder of the paper is organized as follows. 
In \cref{sec:preliminary} we review the Linial--Meshulam model, 
the relevant random matrix theory notions, 
and the free probability framework. 
In \cref{sec:main_result} we state our two main results separately. 
In \cref{sec:result_moments} we give the proof of the combinatorial moment formula for the Gaussian--semicircle law.
In \cref{sec:EED_proof} we prove the limiting EED formula of $W_n$ using the moment method.
\cref{sec:moments} contains the moment calculation and the combinatorial enumeration leading to the limiting moments,
while \cref{sec:variance} proves the variance is bounded.
Based on these two results,
we give the formal proof of the limiting EED formula in \cref{sec:main_thm_proof}.

\section{Preliminary}\label{sec:preliminary}
In this section,
we review the necessary fundamental notions of the Laplacian on Linial-Meshulam model, random matrix theory, and free probability framework.
\subsection{Linial-Meshulam model\label{subsec:Y_q}}
We start with a universe containing $n$ elements.
Let $\LL_n$ be the complete $(q+1)$-simplicial complex $\LL_n$ over these $n$ elements,
that is,
$\LL_n$ is the simplicial complex containing every $(q+1)$-simplex built over those $n$ elements.
Let $C_{q}$ be the space of the $q$-chain over $\mathbb{R}$,
and let $B_{q+1}$ be the standard matrix representation of the boundary map $\partial_{q+1}:C_{q+1}\to C_q$.
It is easy to see that the matrix $B_{q+1}$ is of the shape $\binom{n}{q+1}\times \binom{n}{q+2}$.
The Linial-Meshulam model $Y_{q+1}(n, p)$ is defined by independently dropping every $(q+1)$-simplex with probability $(1-p)$.

Let $D$ be a $\binom{n}{q+2}\times \binom{n}{q+2}$ diagonal matrix,
all diagonal entries are independent and identically distributed (i.i.d.) Bernoulli random variables,
i.e.,
$D_{ii}=\xi_i\sim \Ber(p)$ for every $i\leq \binom{n}{q+2}$.
The up Laplacian $L_q^{\up}$ on the Linial-Meshulam model $Y_{q+1}(n, p)$ is defined as
\[
L_q^{\up}=B_{q+1}DB_{q+1}^\top.
\]
Since all the diagonal entries in $D$ are i.i.d. Bernoulli random variables,
we observe that the expectation $\EE[D]=pI$,
and hence $\EE [L_q^{\up}]=pB_{q+1}B_{q+1}^\top$,
whose eigenvalues contain only multiple $0$'s and $pn$'s.

\begin{lemma}\cite[Lemma 8]{MR3557457}\label{lemma:complete_up_lap}
Let $B_{q+1}$ be the $(q+1)$-boundary matrix of $\LL_n$.
We have 
\[
B_{q+1}B_{q+1}^\top=nP_q,
\]
here $n$ is the number of elements in $\LL_n$,
and $P_q$ is the orthogonal projection from $C_q(K)$ onto $\im(B_{q+1})$.
Moreover,
the eigenvalues of the projection $P_q$ are $0$ with multiplicity $\binom{n-1}{q}$ and $1$ with multiplicity $\binom{n-1}{q+1}$. 
\end{lemma}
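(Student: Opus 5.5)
The plan is to compute $B_{q+1}B_{q+1}^\top$ entrywise, recognize it as $nI$ minus the down Laplacian $B_q^\top B_q$, and then use that image and kernel of $\partial$ are complementary on the (acyclic) full simplex.

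\textbf{Step 1: entries of $B_{q+1}B_{q+1}^\top$.} Index rows and columns by $q$-simplices $\sigma,\sigma'$ with a fixed vertex ordering and orientation. A diagonal entry counts the $(q+1)$-simplices containing $\sigma$; in $\LL_n$ every vertex $v\notin\sigma$ gives one, so the entry is $n-q-1$. For $\sigma\neq\sigma'$ the entry is nonzero only when $\sigma\cup\sigma'$ is a $(q+1)$-simplex, i.e. $|\sigma\cap\sigma'|=q$. In that case $\sigma\cup\sigma'$ is the unique common coface, and the entry is the product of the two incidence signs, $\pm1$.

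\textbf{Step 2: compare with the down Laplacian.} The same computation for $B_q^\top B_q$ gives diagonal entries $q+1$. Its off-diagonal entries are nonzero exactly when $|\sigma\cap\sigma'|=q$, equal to the product of the incidence signs of $\sigma\cap\sigma'$ in $\sigma$ and $\sigma'$. By the standard sign identity $\partial\partial=0$, applied to the simplex $\sigma\cup\sigma'$, the up sign is the negative of the down sign. Hence
\[
B_{q+1}B_{q+1}^\top+B_q^\top B_q=nI.
\]
For $q=0$ we set $B_0=0$, and the identity reads $L=nI-J$ for the complete graph. Checking this sign cancellation carefully is the main technical point. I would verify it with explicit sign formulas: if $\sigma=\rho\cup\{a\}$ and $\sigma'=\rho\cup\{b\}$, the signs are determined by the positions of $a$ and $b$ in the sorted vertex lists, and the relative shift by one gives the factor $-1$.

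\textbf{Step 3: spectral conclusion.} Both $A=B_{q+1}B_{q+1}^\top$ and $C=B_q^\top B_q$ are positive semidefinite, with $AC=B_{q+1}(B_qB_{q+1})^\top B_q=0$ because $B_qB_{q+1}=0$. Since $A+C=nI$, we get $A^2=A(nI-C)=nA$, so $A/n$ is an orthogonal projection. Its image is $\im(A)=\im(B_{q+1})$, so $A/n=P_q$.

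\textbf{Step 4: multiplicities.} The multiplicity of the eigenvalue $1$ equals $\operatorname{rank}B_{q+1}=\dim\im\partial_{q+1}$. The full simplex on $n$ vertices has vanishing reduced homology, so $\operatorname{rank}\partial_{q+1}=\dim C_q-\operatorname{rank}\partial_q$. Inducting downward with the augmented chain complex, this gives
\[
\operatorname{rank}\partial_{q+1}=\sum_{j\ge0}(-1)^j\binom{n}{q+1-j}=\binom{n-1}{q+1},
\]
by the telescoping Pascal identity. Equivalently, the simplices containing a fixed vertex $v$ give a direct count. The remaining multiplicity of $0$ is $\binom{n}{q+1}-\binom{n-1}{q+1}=\binom{n-1}{q}$.
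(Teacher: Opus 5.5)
The paper gives no proof of this lemma. It imports it from Gundert--Wagner \cite[Lemma 8]{MR3557457}, so there is no in-paper argument to compare with. Your proof is correct and self-contained, and it follows the standard argument for the complete complex. The up Laplacian and the augmented down Laplacian sum to $nI$. Since $B_qB_{q+1}=0$ their product vanishes, which forces $B_{q+1}B_{q+1}^\top/n$ to be idempotent with image $\im(B_{q+1})$. Your sign cancellation in Step~2, via the coefficient of $\rho=\sigma\cap\sigma'$ in $\partial\partial(\sigma\cup\sigma')$, is right. Completeness of $\LL_n$ is used exactly where it must be: every $\sigma\cup\sigma'$ with $|\sigma\cap\sigma'|=q$ is present, so every nonzero off-diagonal entry of $B_q^\top B_q$ is cancelled.

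\textbf{The $q=0$ convention.} As written, it is inconsistent. With $B_0=0$ the identity would read $B_1B_1^\top=nI$. That is false, since the graph Laplacian annihilates the all-ones vector. You need the augmentation $B_0=(1,\dots,1)$, so that $B_0^\top B_0=J$ and $B_0B_1=0$. Then Step~2 goes through verbatim with $\rho=\emptyset$ and $[\emptyset:\{v\}]=1$. This is also the convention your Step~4 already uses.

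\textbf{A shortcut for Step~4.} Once $P_q=A/n$, the multiplicity of the eigenvalue $1$ is $\operatorname{rank}P_q=\tr P_q=\frac{1}{n}\binom{n}{q+1}(n-q-1)=\binom{n-1}{q+1}$. This uses only the diagonal count from Step~1, so you need neither the homology of the simplex nor the alternating binomial sum.
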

We first centralize the up Laplacian $L_q^{\up}$.
Let \[
E_n=L_q^{\up}-\EE [L_q^{\up}]=L_q^{\up}-p\,n\,P_q,
\]
 the random matrix $E_n$ is centralized, i.e., $\EE[E_n]=0$.
We express the boundary matrix as 
$B_{q+1}=
\begin{bmatrix}
    \cdots b_\tau \cdots
\end{bmatrix}_{\tau\in \Delta_{q+1}}
$,
here each column vector $b_\tau$ is the boundary of a $(q+1)$-simplex $\tau\in \LL_n$,
and the random matrix $E_n$ can be expressed as
\begin{equation}\label{eq:E_n_decompose}
E_n=\sum_{\tau\in \Delta_{q+1}}(\xi_\tau-p)b_\tau b_\tau^\top.
\end{equation}
We observe that all the terms $(\xi_\tau-p)b_\tau b_\tau^\top$ in \cref{eq:E_n_decompose} are independent of each other since all the $\xi_\tau$ are i.i.d. according to our assumption.
We define the variance of $L_q^{\up}$ to be $\|\EE[E_n^2]\|$.

\begin{lemma}\label{lemma:E_n_var}
    We have the equality
\begin{equation}\label{eq:eq:E_n_var}
        \|\EE [E_n^2]\|=p(1-p)(q+2)n.
    \end{equation}
\end{lemma}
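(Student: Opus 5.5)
Using the decomposition \cref{eq:E_n_decompose} and the independence of the centered variables $\xi_\tau-p$, we expand
\[
\EE[E_n^2]=\sum_{\tau,\tau'}\EE[(\xi_\tau-p)(\xi_{\tau'}-p)]\,b_\tau b_\tau^\top b_{\tau'}b_{\tau'}^\top
=p(1-p)\sum_{\tau\in\Delta_{q+1}} b_\tau (b_\tau^\top b_\tau) b_\tau^\top .
\]
The cross terms vanish because $\EE[\xi_\tau-p]=0$ and the $\xi_\tau$ are independent, and $\EE[(\xi_\tau-p)^2]=p(1-p)$. Next we compute $b_\tau^\top b_\tau$. The column $b_\tau$ is the boundary of a single $(q+1)$-simplex, so it has exactly $q+2$ nonzero entries, one for each $q$-face of $\tau$, and each equals $\pm1$. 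Hence $b_\tau^\top b_\tau=\|b_\tau\|^2=q+2$ for every $\tau$.

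This reduces the sum to
\[
\EE[E_n^2]=p(1-p)(q+2)\sum_{\tau}b_\tau b_\tau^\top=p(1-p)(q+2)\,B_{q+1}B_{q+1}^\top .
\]
Applying \cref{lemma:complete_up_lap} gives $B_{q+1}B_{q+1}^\top=nP_q$, so $\EE[E_n^2]=p(1-p)(q+2)n\,P_q$. It remains to evaluate the operator norm. Since $P_q$ is an orthogonal projection and, by \cref{lemma:complete_up_lap}, has eigenvalue $1$ with positive multiplicity $\binom{n-1}{q+1}$ (assuming $n\ge q+2$, so that the complex has $(q+1)$-simplices), we get $\|P_q\|=1$. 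This yields \cref{eq:eq:E_n_var}.

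The argument is essentially routine. The only point needing care is the justification that the off-diagonal terms $\tau\neq\tau'$ vanish. This uses only the independence and centering of the coefficients, not any orthogonality of the vectors $b_\tau$ (which indeed are not orthogonal). One should also check that the nondegeneracy $\binom{n-1}{q+1}\ge1$ holds, so that the norm is not $0$.
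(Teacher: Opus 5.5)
Your proof is correct and matches the paper's: independence and centering kill the cross terms, $(b_\tau b_\tau^\top)^2=(q+2)\,b_\tau b_\tau^\top$, and \cref{lemma:complete_up_lap} gives $\|B_{q+1}B_{q+1}^\top\|=n$. Your explicit form $\EE[E_n^2]=p(1-p)(q+2)n\,P_q$ and the check that $P_q\neq 0$ when $n\ge q+2$ are small clarifications that the paper leaves implicit.
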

\begin{proof}
    According to \cref{eq:E_n_decompose},
    we have that
    \[
\|\EE \left[E_n^2\right]\|=
\left\|\sum_{\tau\in \Delta_{q+1}} \EE\left[(\xi_\tau-p)^2\right](b_\tau b_\tau^\top)^2\right\|
=
p(1-p)\left\|\sum_{\tau\in \Delta_{q+1}}(b_\tau b_\tau^\top)^2\right\|,
    \]
since $(b_\tau b_{\tau}^\top)^2=(q+2)b_\tau b_\tau^\top$, 
and according to \cref{lemma:complete_up_lap},
$\left\|\sum\limits_{\tau\in \Delta_{q+1}}b_\tau b_\tau^\top\right\|=
\|B_{q+1}B_{q+1}^\top\|=n$,
thus \cref{eq:eq:E_n_var} is proved.
\end{proof}

According to \cref{lemma:E_n_var},
the variance of $L_q^{\up}$ is $O\left(n\right)$,
thus we standardize the random matrix $E_n$ by dividing $\sqrt{n}$ and define 
\begin{equation}\label{eq:W_n}
W_n=\frac{E_n}{\sqrt{n}}=\frac{1}{\sqrt{n}}L_q^{\up}-p\sqrt{n}P_q.
\end{equation}

\subsection{Random matrix theory}\label{subsec:RMT}
The first paper on random matrices is by Hurwitz \cite{Hu97}, 
according to Diaconis and Forrester in \cite{MR3612265}.
In 1928, 
Wishart \cite{wishart} first studied random matrices for fixed size $N$,
then in 1955, 
Wigner came up with the famous semicircle law \cite{MR77805}.
Since then,
random matrix theory has become an important tool used in physics and mathematics.
We refer the reader to the literature \cite{MR2129906, MR2906465, MR4779256, MR2760897} for more details on random matrix theory.

According to $\cref{lemma:complete_up_lap}$,
we have $W_n\big|_{\ker P_q}=0$.
Hence we only need to consider the restriction $W_n\big|_{\im P_q}$.

Let 
\[
N_q=\binom{n-1}{q+1},
\]
and let 
\[
\lambda_1\leq \lambda_2\leq\cdots \leq \lambda_{N_q} 
\]
be the eigenvalues of the restriction $W_n\big|_{\im P_q}$.
Let
\[
\mu_{W_n}=\frac{1}{N_q}\sum_{i=1}^{N_q}\delta_{\lambda_i}.
\]
We refer to $\mu_{W_n}$ as the \emph{empirical eigenvalue distribution} (EED) of $W_n$.

Then the $k$-th moment of $\mu_{W_n}$ is given by
\[
\int_{\mathbb{R}}x^kd\mu_{W_n}=\frac{1}{N_q}\sum_{i=1}^{N_q}\lambda_i^k=\frac{1}{N_q}\tr(W_n^k).
\]
Observe that $\mu_{W_n}$ is a random probability measure,
and for an arbitrary function $f(x)$,
the integral
$\int_\mathbb{R}f(x)d\mu_{W_n}$ is a random variable.
We recall the definition of almost surely weak convergence in the following.

\begin{definition}\label{def:weak_converge}
We say that the random measure $\mu_{W_n}$ almost surely weakly (a.s.w.) converges to a measure $\mu$ if 
\[
\int_{\mathbb{R}}f(x)d\mu_{W_n}(x)\xrightarrow{n\to \infty} \int_{\mathbb{R}}f(x)d\mu(x) \quad \textrm{\textrm{almost surely}}
\]
for every continuous bounded test function $f\in C_b(\mathbb{R})$,
here 
\[
C_b(\mathbb{R})=\{f\in C(\mathbb{R}): \exists M > 0, \textrm{ such that } |f(x)|\leq M\quad \forall x\in \mathbb{R}\}.
\]
\end{definition}

We denote the almost surely weak convergence by 
\[
\mu_{W_n}\xrightarrow{a.s.w.}\mu.
\]
A classical example is the Gaussian Unitary Ensemble (GUE) $A_n=\frac{1}{\sqrt{n}}(a_{ij})_{i, j=1}^{n}$,
that is,
a self adjoint $n\times n$ random matrix,
such that $\{a_{ij}| i\geq j\}$ are independent standard Gaussian random variables,
which are complex for $i\neq j$ and real for $i=j$.
The Wigner's semicircle law asserts that $\mu_{A_n}\xrightarrow{a.s.w.}\mu_{\SC}$,
here $\mu_{A_n}$ is the EED associated with $A_n$:
\[
\mu_{A_n}=\frac{1}{n}\sum_{i=1}^n \delta_{\lambda_i},
\]
and
$\mu_{SC}$ is the standard semicircle law:
\[
\mu_{\SC}(x)=\frac{1}{2\pi}\sqrt{4-x^2}.
\]

\subsection{Free convolution of probability measure}\label{subsec:free_convolution}
The free probability theory was created by Dan Voiculescu in the 1980s for analyzing the von Neumann algebras of free groups.
Later on in the 1990s,
a combinatorial theory of freeness was developed by Nica and Speicher \cite{MR2266879},
who developed the lattice of non-crossing partitions and free cumulants.

In this section,
we follow mainly Novak and LaCroix's lecture notes \cite{MR3380692} to introduce mainly about free cumulants and free convolution which are necessary for describing our main result \cref{thm:main},
and refer the reader to \cite{MR2266879, MR3585560, speicher2025lecturenotesfreeprobability} for more details on free probability. 

\paragraph{Partitions and non-crossing partitions}
We first review some definitions of partitions and non-crossing partitions.
We use $[n]$ to denote the set of $\{1,2,3,\cdots, n\}$.

\begin{definition}[Partitions]\label{def:partitions}
Let $\mathcal{P}(n)$ denote the collection of all partitions of the set $[n]$,
that is,
$\pi=\{B_1, B_2,\cdots, B_k\}\in\mathcal{P}(n)$,
each $B_i\subseteq [n]$ and $B_i\neq \emptyset$,
$\cup_{i\leq k}B_i=[n]$,
and $B_i\cap B_j=\emptyset$ when $i\neq j$.
We call $B_1,\cdots, B_k$ the blocks of the partition $\pi$.
\end{definition}
Following \cref{def:partitions},
we introduce some related notation:
\begin{itemize}
    \item $\mathcal{P}_{\geq 2}(n)$ is the collection of all partitions of $[n]$  requiring that the size of each block be larger than or equal to $2$,
    i.e.,
    \[
    \mathcal{P}_{\geq 2}(n) = \{\pi\in\mathcal{P}(n): \textrm{ for each }B\in \pi, |B|\geq 2 \}.
    \]
    \item $\mathcal{P}_{2}(2n)$ is the collection of all pairing partitions of $[2n]$,
    i.e.,
    \[
    \mathcal{P}_2(2n)=\{\pi\in \mathcal{P}(2n): \textrm{for each } B\in \pi, |B|=2\}.
    \]
\end{itemize}

A pairing partition can be visualized by a chord diagram as follows.
Let $\pi\in\mathcal{P}_2(2n)$ be a partition.
We first draw the elements $1, 2, 3, \cdots, 2n$ in clockwise order along a circle,
and then connect a chord for each block in $\pi$.
We say that a chord diagram is \emph{connected} if there is no proper contiguous subinterval $\{i, i+1, \dots, i+j\} \subsetneq \{1, 2, \dots, 2n\}$ such that every chord originating inside the subinterval also ends inside it.
For example,
\cref{fig:chord} is the chord diagram associated with $\{\{1, 3\}, \{2, 8\}, \{5, 7\}, \{4, 6\}\}$, 
while it is not a connected chord diagram.
\begin{figure}[H]
    \centering

\begin{tikzpicture}[scale=1]
    \draw[draw=gray!50, thin] (0,0) circle (2cm);

    \def\n{8}

    \foreach \i in {1,...,\n} {
        \pgfmathsetmacro{\angle}{90 - (\i - 1) * 360 / \n}
        \node[circle, fill=black, inner sep=2pt, label=\angle:\textbf{\i}] (N\i) at (\angle:2cm) {};
    }


    \draw[thick] (N1.center) -- (N3.center);

    \draw[thick] (N2.center) -- (N8.center);

    \draw[thick] (N4.center) -- (N6.center);

    \draw[thick] (N5.center) -- (N7.center);

\end{tikzpicture}
\caption{The chord diagram of $\{\{1, 3\}, \{2, 8\}, \{5, 7\}, \{4, 6\}\}\in\mathcal{P}_{2}(8)$.}
\label{fig:chord}
\end{figure}
Let $\pi\in \mathcal{P}_2(2r)$ be a pairing partition of $2r$ elements $\{1, 2,\cdots, 2r\}$.
 We say that two pairing blocks $\{a_1, b_1\}, \{a_2, b_2\}\in \pi$ are \emph{crossing} if 
    \[
    a_1< a_2 < b_1 < b_2.
    \]

    \begin{definition}[isolated chord]\label{def:iso_chord}
    We say a chord $\{a, b\}$ is \emph{isolated} if it is not crossing with any other pairing in $\pi$,
    and we denote by $\ic(\pi)$ the set of all isolated chords in $\pi$:
    i.e.,
    \[
    \ic(\pi)=
    \left\{\{a,b\}\in \pi: \{a, b\} \textrm{ crosses no other chord of  } \pi   
    \right\}.
    \]
    \end{definition}

\begin{definition}[Non-crossing partitions]\label{def:non_crossing}
    Let $\pi\in\mathcal{P}(n)$.
If we can find out $a<b<c<d$ such that $a$ and $c$ are in one block $B_1\in \pi$,
$b$ and $d$ are in another block $B_2\in\pi$,
we say $B_1$ and $B_2$ cross.
If there is no pair of blocks crossing,
we say $\pi$ is non-crossing.
We denote the set of all non-crossing partitions of $[n]$ by $\NC(n)$.
\end{definition}
We denote by $\NCev(2n)$ the collection of non-crossing partitions of $[2n]$ that requires the size of each block to be even,
i.e.,
\[
\NCev(2n)=\{\pi\in \NC(2n): \textrm{ for each } B\in \pi, |B| \textrm{ is even}\}.
\]

A geometric intuition of a non-crossing partition is as follows.
Let $\pi\in\NC(n)$ be a partition.
We again draw the elements $1, 2, 3, \cdots, n$ in clockwise order along a circle.
Then for each block $B\in\pi$,
we draw the convex hull connecting all elements in $B$.
The partition $\pi$ is non-crossing if and only if none of the convex hulls intersect each other inside the circle.
See \cref{fig:non_crossing} for an example of non-crossing partition visualization.
\begin{figure}[H]
    \centering
\begin{tikzpicture}[scale=1]
    \draw[draw=gray!50, thin, fill=white] (0,0) circle (2cm);

    \def\n{8}

    \foreach \i in {1,...,\n} {
        \pgfmathsetmacro{\angle}{90 - (\i - 1) * 360 / \n}
        \node[circle, fill=black, inner sep=2pt, label=\angle:\textbf{\i}] (N\i) at (\angle:2cm) {};
    }


    \draw[thick] (N1.center) -- (N2.center) -- (N8.center) -- cycle;

    \draw[thick] (N3.center) -- (N7.center);

    \draw[thick] (N4.center) -- (N5.center) -- (N6.center) -- cycle;
\end{tikzpicture}
\caption{The visualization of $\{\{1, 2, 8\}, \{3, 7\}, \{4, 5, 6\}\}\in\NC(8)$.}\label{fig:non_crossing}
\end{figure}

\paragraph{Cumulants}
We first review the cumulants in classical probability theory.
Let $m_n=\EE[X^n]$ be the $n$-th moment of a random variable $X$.
The cumulants $c_n(X)$ are defined by the recurrences
\begin{equation}\label{eq:cumulant}
m_n(X)=\sum_{\pi\in \mathcal{P}(n)}\prod_{B\in \pi} c_{|B|}(X).
\end{equation}
The first two cumulants are in fact the mean and variance of $X$:
\begin{equation*}
			\begin{aligned}
			c_1(X) &=m_1(X); \\
			c_2(X) &=m_2(X)-m_1(X)^2;\\
			\end{aligned}
		\end{equation*}
One reason why the cumulants are of interest is that the cumulants are simpler than moments.
Consider the standard Gaussian $X\sim\mathcal{N}(0, 1)$ as an example,
the moments of $X$ are given by
\[
m_n(X)=
\begin{cases}
    0 & \textrm{if}\quad n=2k+1\\
    (2k-1)!! & \textrm{if}\quad n=2k.
\end{cases}
\]
In contrast to that,
the cumulants of $X$ are given by
\[
c_n(X)=
\begin{cases}
    1 & \textrm{if}\quad n=2\\
    0 & \textrm{else}.
\end{cases}
\]
\paragraph{Independence}
We can extend the recurrence relationship \cref{eq:cumulant} to that between mixed moments and mixed cumulants.
Let $X_1,\cdots, X_n$ be $n$ random variables (not necessarily distinct).
The mixed moment is defined as
\[
m_n(X_1, \cdots, X_n)=\EE[X_1 \cdots X_n],
\]
and the mixed cumulants are defined recursively by
\[
m_n(X_1\cdots, X_n)=\sum_{\pi\in \mathcal{P}(n)}\prod_{B\in \pi}c_{|B|}(X_i: i\in B).
\]
Following this definition,
the second mixed cumulant of $X_1$, $X_2$ is their covariance:
\[
c_2(X_1, X_2)=m_2(X_1, X_2)-m_1(X_1)m_1(X_2).
\]

There is a fundamental relationship between mixed cumulants and independence.
Two random variables $X_1$ and $X_2$ are independent if and only if all their mixed cumulants vanish,
\begin{equation*}
				\begin{aligned}
					&c_2(X_1,X_2) = 0; \\
					&c_3(X_1,X_1,X_2) = c_3(X_1,X_2,X_2) = 0; \\
					&\vdots
				\end{aligned}
			\end{equation*}	
Hence the cumulants are additive:
\[
c_n(X_1+X_2) = c_n(X_1) + c_n(X_2) \textrm{ for each } n.
\]
\paragraph{Free cumulants}
The free cumulants are also given by recurrence relations similar to \cref{eq:cumulant},
the only difference is that the sum is restricted to non-crossing partitions instead of all partitions.
More rigorously,

We can now give the formal definition of free cumulants.
\begin{definition}\label{def:free_cumulant}
Let $X$ be a random variable with moments $m_n(X)$.
We define the free cumulants $\kappa_n(X)$ of $X$ recursively:
\[
m_n(X)=
\sum_{\pi\in \NC(n)}\prod_{B\in \pi} \kappa_{|B|}(X).
\]
\end{definition}

A free probability analogue of the standard Gaussian is a random variable whose free cumulants are 
\[
0, 1, 0, 0, \cdots,
\]
it turns out that this is in fact the Wigner's semicircle distribution.
In fact,
let $\SC(\sigma^2)$ be the semicircle distribution with variance $\sigma^2$,
the probability density function is 
\[
\mu_{\SC}(x)=\frac{1}{2\pi\sigma^2}\sqrt{4\sigma^2-x^2}.
\]

\begin{lemma}\label{lemma:semicircle_cumulants}
    Let $X\sim\SC(\sigma^2)$.
    Then the free cumulants $\{\kappa_n(X)\}_{n\geq 1}$ are given by 
    \[
    \kappa_n(X)=
    \begin{cases}
        \sigma^2 & \textrm{ if}\quad n=2;\\
        0 & \textrm{else}.
    \end{cases}
    \]
\end{lemma}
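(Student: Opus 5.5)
We prove the lemma by computing the moments of $X\sim\SC(\sigma^2)$ directly and checking that they satisfy the defining recursion of \cref{def:free_cumulant} with the proposed cumulants. Because the moment--cumulant relation in \cref{def:free_cumulant} is triangular, this is enough. The term of $\NC(n)$ consisting of the single block $[n]$ contributes $\kappa_n(X)$, and every other $\pi\in\NC(n)$ involves only $\kappa_j$ with $j<n$. So the free cumulants are uniquely determined by the moments, and it suffices to show
\[
m_n(X)=\sum_{\pi\in\NC(n)}\prod_{B\in\pi}\kappa_{|B|},\qquad \kappa_2=\sigma^2,\ \kappa_j=0\ (j\neq 2).
\]
The right-hand side vanishes for odd $n$. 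For $n=2r$ it equals $\sigma^{2r}\,\#\{\text{non-crossing pairings of }[2r]\}=\sigma^{2r}C_r$, where $C_r=\frac{1}{r+1}\binom{2r}{r}$ is the Catalan number.

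The first step is the moments of the density $\frac{1}{2\pi\sigma^2}\sqrt{4\sigma^2-x^2}$ on $[-2\sigma,2\sigma]$. Odd moments vanish by symmetry. For even moments, substitute $x=2\sigma\sin\theta$ to get
\[
m_{2r}=\frac{2(2\sigma)^{2r}}{\pi}\int_{-\pi/2}^{\pi/2}\sin^{2r}\theta\cos^2\theta\,d\theta .
\]
Applying the Wallis formulas $\int\sin^{2r}\theta\,d\theta=\pi\binom{2r}{r}4^{-r}$ and $\int\sin^{2r+2}\theta\,d\theta=\pi\binom{2r+2}{r+1}4^{-r-1}$ over $[-\pi/2,\pi/2]$ and simplifying gives $m_{2r}=\sigma^{2r}C_r$. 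Alternatively, one can scale to $\sigma=1$ and quote the classical fact that the standard semicircle moments are Catalan numbers.

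The second step is the combinatorial count that non-crossing pairings of $[2r]$ number $C_r$. Condition on the partner $2j$ of the element $1$; it must have even position so the interior can be paired without crossing. This decomposes the pairing into a non-crossing pairing of the $2j-2$ inner points and one of the $2r-2j$ outer points, giving $N_r=\sum_{j=1}^{r}N_{j-1}N_{r-j}$ with $N_0=1$, which is the Catalan recursion.

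Combining the two steps, the proposed cumulants reproduce every moment, and uniqueness then gives the claim. The only step needing care is the integral evaluation in the first step, which is routine. The general $\sigma$ case also follows from $\sigma=1$ by homogeneity, since $\kappa_n(cX)=c^n\kappa_n(X)$.
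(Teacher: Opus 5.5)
Your proof is correct, but it is not the paper's route, because the paper gives no proof of this lemma at all. It states the lemma as a standard fact from the free-probability lecture notes it follows, so your argument supplies a self-contained verification where the paper relies on the literature.

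You work directly from the recursive definition of $\kappa_n$ in \cref{def:free_cumulant}:
\begin{itemize}
\item triangularity of the moment--cumulant relation gives uniqueness of the cumulants;
\item the substitution $x=2\sigma\sin\theta$ together with the Wallis integrals gives $m_{2r}=\sigma^{2r}\frac{1}{r+1}\binom{2r}{r}$ (this checks out);
\item the first-return decomposition shows that non-crossing pairings of $[2r]$ obey the same Catalan recursion.
\end{itemize}

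The usual textbook alternative is analytic. The Cauchy transform $G(z)=\bigl(z-\sqrt{z^2-4\sigma^2}\bigr)/(2\sigma^2)$ satisfies $\sigma^2G^2-zG+1=0$. Hence $G^{-1}(w)=\sigma^2w+1/w$ and $R(w)=\sigma^2 w$, from which every cumulant is read off at once.

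Your version needs only calculus and a counting recursion, and it matches the combinatorial style the paper uses elsewhere, e.g.\ in the proof of \cref{thm:moments}. The $R$-transform route is shorter once that machinery is available, and it is what one uses for laws whose moments are not already known in closed form.

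One small point: the paper reserves $C_r$ for the number of connected chord diagrams (\cref{lemma:gauss_free_cumulant}, \cref{rmk:A000699}). Rename your Catalan numbers, for instance to $\operatorname{Cat}_r$, to avoid a clash when both lemmas are combined in the proof of \cref{thm:moments}.
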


In contrast to that,
the free cumulants of Gaussian variable are not as simple as  the classical cumulant,
they are in fact given by the number of connected pairing partitions.
We denote by $\mathcal{P}_{2}(2n)$ the collection of all pairing partitions of $2n$ elements, 
that is,
\[
\mathcal{P}_{2}(2n)=\{\pi\in \mathcal{P}(2n): \textrm{each block } B\in\pi, |B|=2\}.
\]

According to \cite{MR1938356},
we can describe the free cumulants of a Gaussian variable $X\sim \mathcal{N}(0, \sigma^2)$ as follows:
\begin{lemma}\label{lemma:gauss_free_cumulant}
    Let $X\sim\mathcal{N}(0, \sigma^2)$ be a Gaussian distribution.
    Then the free cumulants $\{\kappa_n(X)\}_{n\geq 1}$ are given by
    \[
    \kappa_n(X)=
    \begin{cases}
        C_r \sigma^{2r} & n=2r \textrm{ is even};\\
        0 & n \textrm{ is odd},
    \end{cases}
    \]
    here $\{C_r\}_{r\geq 1}$ is the number of connected chord diagrams with $2r$ elements. 
\end{lemma}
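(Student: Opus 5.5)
We need to prove \cref{lemma:gauss_free_cumulant}: the free cumulants of $\mathcal N(0,\sigma^2)$ are $\kappa_{2r}=C_r\sigma^{2r}$, where $C_r$ counts connected chord diagrams on $2r$ points, and the odd cumulants vanish.

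\textbf{Reduction to $\sigma=1$.} Free cumulants are homogeneous: $\kappa_n(cX)=c^n\kappa_n(X)$, since $m_n(cX)=c^n m_n(X)$ and every block product over a partition of $[n]$ carries total degree $n$. So it suffices to treat $\sigma=1$.

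\textbf{Odd cumulants.} The free moment--cumulant relation in \cref{def:free_cumulant} is triangular: $\kappa_n$ is determined recursively by $m_1,\dots,m_n$. If all odd moments vanish, then by induction all odd free cumulants vanish. Indeed, for odd $n$, every non-crossing partition of $[n]$ other than the one-block partition $1_n$ has some block of odd size smaller than $n$, so its contribution is zero. Hence $\kappa_n=m_n=0$.

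\textbf{Even cumulants: strategy.} Define $\tilde\kappa_{2r}:=C_r$ and $\tilde\kappa_{\mathrm{odd}}:=0$. By uniqueness in the recursion of \cref{def:free_cumulant}, it is enough to verify
\[
(2r-1)!!=\#\mathcal P_2(2r)=\sum_{\pi\in\NC(2r)}\prod_{B\in\pi}\tilde\kappa_{|B|}.
\]
The right-hand side is $\sum_{\pi\in\NCev(2r)}\prod_{B\in\pi}C_{|B|/2}$. Reading it combinatorially, it counts pairs consisting of an even non-crossing partition $\pi$ together with a connected chord diagram placed on each block $B$, using the cyclic order induced from $[2r]$. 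The proof therefore reduces to a bijection between such decorated partitions and $\mathcal P_2(2r)$.

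\textbf{The bijection.} Given a pairing $\sigma\in\mathcal P_2(2r)$, form the crossing graph on its chords, with an edge between two chords whenever they cross. Let each connected component of this graph determine the set of endpoints of its chords; these sets form a partition $\pi(\sigma)$ of $[2r]$ with even blocks. The proof then proceeds in three steps.
\begin{enumerate}
\item \emph{$\pi(\sigma)$ is non-crossing.} Suppose two components had interleaving endpoints $a<b<c<d$, with $a,c$ in component $K_1$ and $b,d$ in component $K_2$. Follow a path of crossing chords in $K_1$ from $a$ to $c$. This path stays geometrically connected, so its union separates $b$ from $d$ on the circle. Hence some chord of $K_2$, which also connects $b$ to $d$ via crossings, must cross some chord of that path, contradicting the fact that $K_1\ne K_2$. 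Formally, a connected union of chords joining $a$ to $c$ meets every curve from $b$ to $d$ that stays inside the disk.
\item \emph{Restrictions are connected.} The restriction of $\sigma$ to each block is a connected chord diagram in the sense of the paper. One checks that the paper's interval definition of connectedness is equivalent to connectedness of the crossing graph. If the crossing graph splits, some component occupies a union of arcs, and by the non-crossing property from step 1 some component is an interval closed under the pairing. Conversely, a closed proper interval is clearly disconnected from its complement in the crossing graph.
\item \emph{Inverse map.} Given an even non-crossing $\pi$ with a connected diagram on each block, take the union of the chords. Chords in different blocks cannot cross, because the blocks are non-crossing. Therefore the components of the crossing graph are exactly the blocks, which recovers $(\pi,\text{decorations})$.
\end{enumerate}

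\textbf{Main obstacle.} The only genuinely delicate points are the topological separation argument in step 1 and the equivalence of the interval definition of connectedness with crossing-graph connectedness in step 2. Both should be written carefully, using the planar chord-diagram picture. The remaining steps are mechanical.
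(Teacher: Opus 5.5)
\textbf{How your proof compares with the paper's.} The paper does not prove this lemma; it simply imports it from Lehner \cite{MR1938356}. Your argument is correct. It is essentially the standard proof behind that citation, specialised to pairings. The Gaussian moment $(2r-1)!!$ counts all pairings. Grouping chords by the components of the crossing graph yields an even non-crossing partition decorated by connected pairings on its blocks. Triangularity of the free moment--cumulant recursion then forces $\kappa_{2r}=C_r$. Your homogeneity reduction and your odd-cumulant induction are both fine.

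\textbf{What your route buys.} The first gain is self-containedness. The second is more useful: your Steps 1--3 are exactly the justification the paper omits in the proof of \cref{thm:moments}. There the map $\Phi$ (``merging all the connected chords into one single block'') is asserted without proof to land in $\NCev(2r)$ and to satisfy $\#\Phi^{-1}(\sigma)=\prod_{B\in\sigma}C_{|B|/2}$. Step 2 quietly uses one further standard fact: a non-crossing partition with at least two blocks has a block of consecutive elements. A block of minimal span works, since any element lying in its gap would belong to a block of smaller span.

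\textbf{Avoiding planar topology in Step 1.} If you want to replace the separation argument, use the following elementary fact. Suppose a crossing-connected family of chords has no endpoint at two given points $x,y$, but has endpoints in both open arcs cut out by $x,y$. Then some chord of the family has one endpoint in each arc. Otherwise the chords split into two nonempty groups, each lying inside one arc, and chords in different arcs cannot cross. Apply this first to $K_2$ with $\{x,y\}=\{a,c\}$, which gives a chord $e\in K_2$ separating $a$ from $c$. Then apply it to $K_1$ with $\{x,y\}$ the endpoints of $e$, which gives a chord of $K_1$ crossing $e$. That is the required contradiction, reached purely combinatorially.
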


\begin{remark}\label{rmk:A000699}
The sequence $\{C_r\}_{r\geq 1}$ is referred to as $\texttt{A000699}$ in Sloane's Online Encyclopedia of Integer Sequences \cite{oeisA000699};
the first few terms are
\[
1, 1, 4, 27, 248, 2830, \cdots.
\]
\end{remark}

\paragraph{Free convolution}
If $X$ and $Y$ are two classical independent random variables with distributions $\mu_X$ and $\mu_Y$ respectively,
 the distribution of $X+Y$ is the convolution $\mu_X*\mu_Y$.
In the framework of free probability theory,
one can also define the \emph{free independence} of noncommutative random variables.
In analogy to the classical probability theory,
if $X$ and $Y$ are \emph{freely independent},
the free cumulants are also additive:
\[
\kappa_n(X+Y)=\kappa_n(X)+\kappa_n(Y),
\]
in this case,
the distribution of the summed noncommutative random variable $X+Y$ is referred to as the \emph{free convolution} of $\mu_X$ and $\mu_Y$,
and it is denoted as $\mu_X\boxplus \mu_Y$.

We will not present the formal definition of free independence and free convolution.
Instead, we give an intuitive example to explain it.
Let $A_n=\frac{1}{\sqrt{n}}(a_{ij})_{i,j=1}^n$ be a standard GUE matrix,
$B_n=(b_{ij})_{i,j=1}^n$ be  a diagonal matrix,
and all the diagonal entries $\{b_{ii}\}_{i= 1}^n$ are i.i.d real standard Gaussian.
If the entries of $A_n$ and $B_n$ are independent,
then the EED of $A_n+B_n$ converges a.s.w. to $\mathcal{N}(0, 1)\boxplus \SC(1)$.

\section{Main Results}\label{sec:main_result}

We now state the main results of the paper. 
Our first theorem gives an explicit combinatorial representation of the moments of the Gaussian--semicircle law. 
Our second theorem identifies that the limiting EED of $W_n$ is equal to $\mu_{\lap}$ in the sense of almost surely weak convergence.
These results provide a direct connection between the geometry of simplicial complexes, pairing partitions, and free probability.

\subsection{The Gaussian--semicircle law}\label{sec:result_moments}
In this section we will show that the Gaussian--semicircle law admits a purely combinatorial description. 
Recall \cref{def:iso_chord} that $\ic(\pi)$ is the set of isolated chords in a pairing partition $\pi$.
\begin{theorem}[Combinatorial moment formula]\label{thm:moments}
Let $\sigma>0$, $s>0$.
Let
\[
\mu_{\GS}
=
\mathcal N\bigl(0,\sigma^2\bigr)
\boxplus
\operatorname{SC}\bigl(s\sigma^2\bigr)
\] be a Gaussian--semicircle law associated with $\sigma$ and $s$,
and $m_{k}=\int_{\mathbb{R}}x^{k}d\mu_{\GS}(x)$ be the $k$-th moment of $\mu_{\GS}$.
Then we have
\begin{equation}\label{eq:moments}
m_{k}=
\begin{cases}
    0 & \textrm{if}\quad k= 2r+1;\\
    \sigma^{2r} \sum\limits_{\pi\in \mathcal{P}_2(2r)}(s+1)^{\# \ic(\pi)} & \textrm{if} \quad k=2r.
\end{cases}
\end{equation}
\end{theorem}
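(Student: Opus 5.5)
The plan is to compute the moments of $\mu_{\GS}$ from its free cumulants through the moment--cumulant formula of \cref{def:free_cumulant}. Then I would reorganize the resulting sum over non-crossing partitions as a sum over pairings, by decomposing each pairing into its crossing-connected components.

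\textbf{Step 1: free cumulants of $\mu_{\GS}$.} By additivity of free cumulants under $\boxplus$, together with \cref{lemma:semicircle_cumulants} and \cref{lemma:gauss_free_cumulant}, we get
\[
\kappa_2=(1+s)\sigma^2,\qquad \kappa_{2j}=C_j\sigma^{2j}\ (j\ge 2),\qquad \kappa_{\mathrm{odd}}=0 .
\]
Here we use $C_1=1$ (see \cref{rmk:A000699}). The moment--cumulant formula then gives
\[
m_k=\sum_{\pi\in\NC(k)}\prod_{B\in\pi}\kappa_{|B|}.
\]
Every partition of a set of odd size has a block of odd size, so every term vanishes when $k$ is odd, and $m_{2r+1}=0$. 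For $k=2r$, only $\pi\in\NCev(2r)$ contribute, and each block $B$ with $|B|=2j$ contributes $\sigma^{2j}$ times either $(1+s)$ (if $j=1$) or the number $C_j$ of connected chord diagrams on the $2j$ points of $B$, taken in their induced cyclic order.

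\textbf{Step 2: a bijection.} Expanding each factor $C_j$ as a sum over connected chord diagrams on $B$, the sum becomes a sum over pairs $(\pi,\{\rho_B\}_{B\in\pi})$. Here $\pi\in\NCev(2r)$ and each $\rho_B$ is a connected pairing of $B$, so the pair determines the pairing $\rho=\bigcup_B\rho_B\in\mathcal P_2(2r)$. I would show that $(\pi,\{\rho_B\})\mapsto\rho$ is a bijection onto $\mathcal P_2(2r)$. The inverse sends $\rho$ to the partition whose blocks are the vertex sets of the connected components of the crossing graph of $\rho$ (chords are vertices, crossing chords are adjacent). Two facts are needed:
\begin{enumerate}[label=(\roman*)]
\item the component partition is non-crossing;
\item the restriction of $\rho$ to a block is connected in the sense of the paper (no proper contiguous subinterval closed under $\rho$) if and only if its crossing graph is connected.
\end{enumerate}
For (i): if two components crossed as sets, say with $a<b<c<d$, $a,c$ in one component and $b,d$ in the other, then following a path of crossing chords from $a$ to $c$ would force some chord to cross a chord of the other component, which is a contradiction. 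For (ii), one direction is clear: a closed proper interval splits the chords into inside and outside chords that cannot cross, so the crossing graph is disconnected. Conversely, if the crossing graph is disconnected, I would take an innermost component, i.e. one nested inside a gap of another (the existence follows from (i)); its support lies in a proper interval that is closed under $\rho$, after possibly enlarging the interval to absorb components nested within it.

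\textbf{Step 3: weights.} Under this bijection the blocks of size $2$ correspond exactly to the chords that cross nothing, i.e. to $\ic(\rho)$. Every block contributes $\sigma^{|B|}$, and blocks of size $2$ contribute the extra factor $(1+s)$. So the total weight of $\rho$ is $\sigma^{2r}(1+s)^{\#\ic(\rho)}$, and summing over $\rho\in\mathcal P_2(2r)$ gives \cref{eq:moments}.

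I expect the main obstacle to be Step 2(ii), namely reconciling the paper's interval-based notion of a connected chord diagram (which \cref{lemma:gauss_free_cumulant} relies on) with connectivity of the crossing graph, and making the innermost-component argument rigorous. Once that equivalence is in place, the rest is bookkeeping.
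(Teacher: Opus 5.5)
Your proposal is correct and follows the same route as the paper. Both compute the free cumulants of $\mu_{\GS}$, apply the non-crossing moment--cumulant formula restricted to $\NCev(2r)$, and then sum over the map sending a pairing to its crossing-connected components (the paper's $\Phi$, ``merging all the connected chords into one single block''). The fibres of this map have size $\prod_{B}C_{|B|/2}$, and $\#\ic$ equals the number of two-element blocks.

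Your points (i) and (ii) supply exactly the details the paper dismisses as ``easy to see''. For the converse direction of (ii) you do not need an innermost component: by (i), the hull $[\min S_K,\max S_K]$ of any component $K$ not containing $1$ is already a proper interval closed under $\rho$.
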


\begin{proof}
Let $\{\kappa_n\}_{n\geq 1}$ be the free cumulants of $\mu=\mathcal{N}(0, \sigma^2)\boxplus \SC(s\sigma^2)$.
According to \cref{lemma:semicircle_cumulants} and \cref{lemma:gauss_free_cumulant},
we have 
\[
\kappa_k=
\begin{cases}
   (s+1)\sigma^2 & k = 2;\\
   C_r\sigma^{2r} & k=2r > 2;\\
   0 & k $\textrm{ is odd}$.
\end{cases}
\]
    Recall the recursion formula of free cumulants:
    \[
    m_{k}=\sum_{\pi\in \NC(k)}\prod_{B\in \pi} \kappa_{_{|B|}},
    \]
    it is obvious that all the odd moments of $\mu_{\lap}$ vanish,
    and when $k=2r$ is even,
    we can restrict the sum to the even non-crossing partitions and obtain:
    \[
\begin{aligned}
m_{2r}
&=\sum_{\sigma\in \NCev(2r)}\prod_{B\in \sigma}\kappa_{|B|}\\
&=\sum_{\sigma\in\NCev(2r)}\prod_{\substack{B\in\sigma\\|B|=2}}\kappa_{2}\prod_{\substack{B\in\sigma\\|B|\geq 4}}\kappa_{|B|}.
\end{aligned}
\]
hence 
\[
\begin{aligned}
m_{2r}
&=\sum_{\sigma\in\NCev(2r)}\prod_{\substack{B\in\sigma\\|B|=2}}(s+1)\sigma^2\prod_{\substack{B\in\sigma\\|B|\geq 4}}C_{|B|/2}\sigma^{|B|}\\
&=
\sigma^{2r}\sum_{\sigma\in\NCev(2r)}\prod_{\substack{B\in\sigma\\|B|=2}}(s+1)\prod_{\substack{B\in\sigma\\|B|\geq 4}}C_{|B|/2}.
\end{aligned}
\]

We define a map
\[
\begin{aligned}
    \Phi: \mathcal{P}_2(2r)&\to \NCev(2r)\\
    \pi & \mapsto \Phi(\pi)
\end{aligned}
\]
by merging all the connected chords into one single block.
since the size of each chord is $2$, the map $\Phi$ is surjective.
It is easy to see that 
\[\#\ic(\pi_1)=\#\ic(\pi_2) \textrm{ if }
 \Phi(\pi_1)=\Phi(\pi_2),
\]
hence 
\[
\prod_{\substack{B\in\sigma\\|B|=2}}(s+1)=(s+1)^{\ic\left(\Phi^{-1}(\sigma)\right)},
\]
recall that $C_1=1$,
hence
\[
\prod_{\substack{B\in\sigma\\|B|\geq 4}}C_{|B|/2}
=
\prod_{B\in\sigma}C_{|B|/2}=
\sum_{\pi\in\Phi^{-1}(\sigma)}1.
\]
Thus
\[
\begin{aligned}
m_{2r}
&=
\sigma^{2r}\sum_{\sigma\in \NCev(2r)}(s+1)^{\#\ic(\Phi^{-1}(\sigma))}\sum_{\pi\in \Phi^{-1}(\sigma)}1\\
&=
\sigma^{2r}\sum_{\substack{\pi\in \Phi^{-1}(\sigma)\\\sigma\in \NCev(2r)}}(s+1)^{\#\ic(\Phi^{-1}(\sigma))}\\
&=
\sigma^{2r}\sum_{\pi\in\mathcal{P}_2(2r)}(s+1)^{\#\ic(\pi)}.
\end{aligned}
\]
\end{proof}

Notice that in \cref{eq:moments} the sum is taken over all pairing partitions, 
rather than only over non-crossing pairings as commonly used in the area of free probability,
and the statistic that determines the contribution of a pairing is the number of isolated chords.

\begin{example}
We compute the $4$-th moment $m_4$ as an example to explain \cref{eq:moments} more explicitly.
When $k=2r=4$,
there are $(2\times 2-1)!!=3$ different pairing partitions,
as displayed in \cref{fig:P2_4}.
We observe that the partitions $\{\{1,2\}, \{3, 4\}\}$ and $\{\{1,4\}, \{2, 3\}\}$ have $2$ isolated chords for each,
and the partition $\{\{1,3\}, \{2, 4\}\}$ has no isolated chord.
Hence according to \cref{thm:moments} the $4$-th moment is
\[
m_{4}=\sigma^4\left(2(s+1)^2+1\right).
\]
\tikzset{
    partition_circle/.style={draw=gray!50, thin, fill=white},
    vertex/.style={circle, draw=black, fill=black, inner sep=1.2pt},
    label_node/.style={font=\small},
    chord/.style={thick, black!90}
}

\newcommand{\drawPairing}[1]{%
    \begin{tikzpicture}[baseline=(current bounding box.center)]
        \def\r{1.3} 
        \draw[partition_circle] (0,0) circle (\r);
        
        \foreach \i in {1,...,4} {
            \pgfmathsetmacro{\angle}{135 - (\i-1)*90}
            \node[vertex] (v\i) at (\angle:\r) {};
            \node[label_node] at (\angle:\r+0.3) {\i};
        }
        
        \foreach \u/\v in {#1} {
            \draw[chord] (v\u) -- (v\v);
        }
    \end{tikzpicture}
}

\begin{figure}[H]
    \centering
    \begin{subfigure}[b]{0.3\textwidth}
        \centering
        \drawPairing{1/2, 3/4}
        \caption*{\{\{1,2\}, \{3,4\}\}}
    \end{subfigure}
    \hfill
    \begin{subfigure}[b]{0.3\textwidth}
        \centering
        \drawPairing{1/4, 2/3}
        \caption*{\{\{1,4\}, \{2,3\}\}}
    \end{subfigure}
    \hfill
\begin{subfigure}[b]{0.3\textwidth}
        \centering
        \drawPairing{1/3, 2/4}
        \caption*{\{\{1,3\}, \{2,4\}\}}
    \end{subfigure}
    \caption{The pairing partitions $\mathcal{P}_2(4)$.}
    \label{fig:P2_4}
\end{figure}
\end{example}

\subsection{Limiting eigenvalue distribution}\label{sec:result_EED}

The main result of the limiting EED of $W_n$ is the following.

\begin{theorem}[Limiting spectral distribution]\label{thm:main}
Let \(q\geq0\) and \(p\in(0,1)\) be fixed. Let \(W_n\) be the standardized up Laplacian of the Linial--Meshulam random complex \(Y_{q+1}(n,p)\), 
let $\eta_2=p(1-p)$ be the variance of $\Ber(p)$,
and let \(\mu_{W_n}\) be its empirical eigenvalue distribution on \(\operatorname{im}(P_q)\). 
Then
\begin{equation}\label{eq:free_conv}
\mu_{W_n}
\xrightarrow{\mathrm{a.s.w.}}
\mu_{\mathrm{Lap}},
\end{equation}
where
\[
\boxed{
\mu_{\mathrm{Lap}}
=
\mathcal N\bigl(0,\eta_2\bigr)
\boxplus
\operatorname{SC}\bigl((q+1)\eta_2\bigr).
}
\]
\end{theorem}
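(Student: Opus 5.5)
We prove \cref{thm:main} by the moment method. The plan has three steps: compute the limiting expected moments $\frac{1}{N_q}\EE\tr(W_n^k)$, show that the fluctuations of $\frac{1}{N_q}\tr(W_n^k)$ around their means are summable, and conclude almost sure weak convergence. For the last step we use that $\mu_{\lap}$ is determined by its moments; its moments are bounded by those of a Gaussian times a constant, by \cref{thm:moments}. The target moments come from \cref{thm:moments} with $\sigma^2=\eta_2$ and $s=q+1$:
\[
m_{2r}=\eta_2^{r}\sum_{\pi\in\mathcal P_2(2r)}(q+2)^{\#\ic(\pi)},\qquad m_{2r+1}=0.
\]

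\textbf{Step 1: expected moments.} Using \cref{eq:E_n_decompose},
\[
\tr(W_n^k)=n^{-k/2}\sum_{\tau_1,\dots,\tau_k}\prod_{i}(\xi_{\tau_i}-p)\,\tr\bigl(b_{\tau_1}b_{\tau_1}^\top\cdots b_{\tau_k}b_{\tau_k}^\top\bigr).
\]
The trace factor equals the cyclic product $\prod_i\langle b_{\tau_i},b_{\tau_{i+1}}\rangle$, indices mod $k$. Here $\langle b_\tau,b_{\tau'}\rangle$ equals $q+2$ if $\tau=\tau'$, equals $\pm1$ if $|\tau\cap\tau'|=q+1$, and equals $0$ otherwise. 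So each word $(\tau_1,\dots,\tau_k)$ is a closed walk in the Johnson-type graph on $(q+1)$-simplices, where consecutive simplices are equal or share a $q$-face.

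Taking expectations kills every word in which some simplex appears exactly once. Group the words by the partition $\pi\in\mathcal P_{\ge2}(k)$ of positions according to equal simplices. Then count the free parameters, i.e. new vertices. A distinct simplex adjacent to a previous one contributes one new vertex, giving a factor of $n$. The total count is $N_q\asymp n^{q+1}$ times $n^{\#\text{blocks}-1}$ at most. Against the normalization $n^{-k/2}N_q^{-1}$, only pair partitions survive, and only those in which the $r$ distinct simplices use $r$ new vertices (a "tree-like" growth).

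For $\pi\in\mathcal P_2(2r)$, consider the cyclic sequence. When consecutive positions carry the same simplex, the inner product is $q+2$. When they differ, the simplices must share a $q$-face, and the sign squares out.

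The counting then has to produce the claimed weights. The expected mechanism is as follows. For an isolated chord $\{a,b\}$ (no other chord crosses it), the simplices strictly between $a$ and $b$ form a closed excursion. Its contribution splits into two kinds: the "semicircle" kind, in which the inner arc returns to a neighbor of $\tau_a$ through a shared face, giving a factor $q+1$ from the choice of which $q$-face of the $(q+1)$-simplex is used; and the "Gaussian/diagonal" kind, in which the excursion stays at a common $q$-face, giving a factor $1$. Together these produce the weight $(q+1)+1=q+2$ per isolated chord. Crossing chords force all simplices involved to contain a common $q$-face $\sigma_0$, i.e. to lie in the star of one $q$-face. This reproduces the degree (diagonal) mechanism with weight $1$, matching the Gaussian moment $(2r-1)!!$ contributions.

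I expect this enumeration to be the main obstacle. It requires proving that each leading-order word is determined by the choice of one $q$-face for each "block" of the connected-chord structure, together with choices of new vertices. It also requires showing that all configurations with an extra coincidence lose a power of $n$. I would organize it by induction on $r$: remove an innermost isolated chord or an innermost connected component, exactly mirroring the map $\Phi$ in the proof of \cref{thm:moments}. A check at $r=1$ gives
\[
\tfrac{1}{N_q}\EE\tr W_n^2=\tfrac{1}{nN_q}\eta_2(q+2)\binom n{q+2}(q+2)\to\eta_2(q+2),
\]
which is consistent.

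\textbf{Step 2: variance.} Write $\Var\bigl(\frac1{N_q}\tr W_n^k\bigr)$ as a sum over pairs of words. Since the $\xi_\tau$ are independent, only pairs of words sharing at least one simplex contribute. Such sharing removes at least $n^{q+1}\cdot n$ worth of freedom relative to $N_q^2$, so the same vertex count as in Step 1 gives the bound $O(n^{-2})$. This is summable, so Borel--Cantelli gives $\frac1{N_q}\tr W_n^k\to m_k$ almost surely for every $k$. Moment determinacy then yields \cref{eq:free_conv}.
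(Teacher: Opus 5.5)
Your route is the paper's: word expansion, leading pairings whose $r$ simplices span $q+r+1$ vertices, a variance bound with Borel--Cantelli, and Carleman. However, the variance step fails as stated.

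A nonzero $\Cov[\widehat\xi_{\boldsymbol\tau},\widehat\xi_{\boldsymbol\sigma}]$ only requires each simplex to occur at least twice among the $2k$ positions of the two words \emph{jointly}. So a shared simplex may occur once in each word. This gains back a factor $n$, and the loss relative to $N_q^2n^k$ is only $n^{q+1}$, not the $n^{q+1}\cdot n$ you claim. For instance, take $k=2$ and $\boldsymbol\tau=\boldsymbol\sigma=(\rho_1,\rho_2)$ with $\rho_1\neq\rho_2$ upper adjacent. There are $\asymp n^{q+3}$ such pairs, each contributing $\eta_2^2$. In fact a direct computation gives $\Var\bigl[\frac{1}{N_q}\tr W_n^2\bigr]=\frac{2\eta_2^2}{N_q}\bigl(1+o(1)\bigr)$. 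Hence the correct and sharp bound is the paper's $O(n^{-(q+1)})$.

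For $q\ge1$ this is still summable and your conclusion survives. For $q=0$, which the theorem includes, the variance is $\sim 2\eta_2^2/n$, carried by the degree fluctuations $\frac{1}{n^2}\sum_i(\deg i-\EE\deg i)^2$. Then Chebyshev plus Borel--Cantelli gives nothing. The paper disposes of $q=0$ by citing Ding--Jiang. Alternatively, bound the fourth central moment. It equals $3\Var^2$ plus the fourth cumulant, and the same counting applied to connected clusters of four words gives the cumulant as $O(n^{-3(q+1)})$. So the fourth central moment is $O(n^{-2(q+1)})$, which is summable for every $q\ge0$.

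Step 1 is left as a heuristic exactly where the work lies, although your ingredients are the right ones. The paper completes it as follows. To each $\pi\in\mathcal P_2(2r)$ it attaches the $4$-regular multigraph $H_\pi$ on the chords, with an edge $\pi(i)\sim\pi(i+1)$ for each position, and its simple graph $G_\pi$. When the $r$ simplices span exactly $q+r+1$ vertices, every cycle among them is a sunflower. This is what makes the sign product equal to $+1$. Your ``the sign squares out'' is false without it: three $(q+1)$-faces of one $(q+2)$-simplex give cyclic product $-1$. The sunflower structure also yields $\frac{(q+r+1)!}{(q+1)!}(q+2)^{\#\art(G_\pi)}$ admissible embeddings on $q+r+1$ points.

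For $r\ge2$ the weight of $\pi$ is then $(q+2)^{\#\loops(H_\pi)+\#\art(G_\pi)}$. Loops come from cyclically adjacent isolated chords, via $\langle b_\tau,b_\tau\rangle=q+2$. Articulation vertices come from the remaining isolated chords, via the $q+2$ choices of the face of $\tau$ at which the inner excursion attaches; this is your $(q+1)+1$ split.

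Also fix the count in Step 1: the root simplex has $\binom{n}{q+2}\asymp nN_q$ choices, not $N_q$, as your own $r=1$ check uses. With $N_q$ every limit would vanish.
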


The limiting measure $\mu_{\lap}$ has two distinct components. 
When \(q=0\), the up Laplacian is the ordinary graph Laplacian, and \cref{thm:main} reduces to the Gaussian--semicircle limit obtained by Ding and Jiang \cite{MR2759729} for the Erd\H{o}s--R\'enyi model. For \(q\geq1\), the theorem gives the corresponding fluctuation law for the higher-dimensional up Laplacian. 
The Gaussian component has variance $\eta_2$,
while the semicircular component has variance $(q+1)\eta_2$.
Thus,
the higher-dimensional boundary structure contributes an additional factor \(q+1\) to the semicircular part of the limiting spectrum.


The proof of \cref{thm:main} is completed in \cref{sec:main_thm_proof} using the moment methods. 
There we establish the convergence of the expected moments in \cref{sec:moments}.
Together with the variance bound for the scaled moments in \cref{sec:variance}. 
The moment-determinacy of the limiting measure thus yields the almost surely weak convergence of every trace moment stated in \cref{thm:main}.

The appearance of the factor \(q+2\) in the moments of $\mu_{\lap}$ is a geometric feature of the simplicial complex. Every \((q+1)\)-simplex contains exactly \(q+2\) different \(q\)-faces.
Whenever an isolated chord produces a repeated simplex in the trace expansion, the corresponding contribution acquires a factor \(q+2\). The higher-dimensional geometry is therefore recorded in the pairing formula through the weight
$$
(q+2)^{\#\ic (\pi)}.
$$

\cref{thm:moments} gives a combinatorial representation of the moments of the candidate limiting measure, while \cref{thm:main} identifies this measure as the almost-sure limiting EED of $W_n$.
We give an intuitive visualization of this limiting EED below. 
\begin{example}
We set $p=0.3$, $q=1$.
We draw the EED of $W_n$,
for different $n=25$, $50$, $100$ and $200$ respectively (see \cref{fig:histgrams}). 
The ideal probability measure is 
\[
\mu_{\lap}=\mu_{\operatorname{Gauss}}\boxplus \mu_{\SC}
=\mathcal{N}(0, 0.21)\boxplus\SC(0.42).
\]
As a comparison,
we sum up a $10000\times 10000$ GUE matrix with variance $0.42$,
and a Gaussian diagonal matrix with variance $0.21$.
We draw the EED of this summed matrix in gray.
We notice that when $n$ increases,
the EED of $W_n$ converges to $\mu_{\lap}$.
\begin{figure}[H]
  \centering
  \subcaptionbox{Model $Y_2(25, 0.3)$}
  {%
    \includegraphics[width=0.4\textwidth]{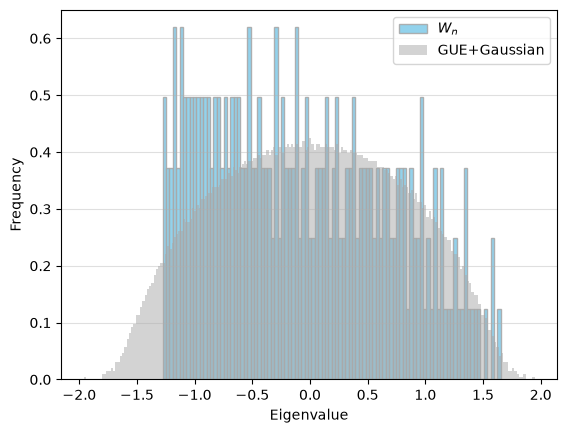}%
  }\hfill
  \subcaptionbox{Model $Y_2(50, 0.3)$}
  {%
    \includegraphics[width=0.4\textwidth]{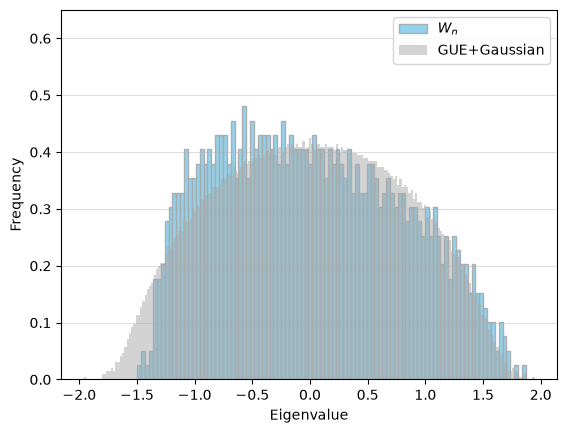}%
  }
  
  \vspace{1em} 
  
  \subcaptionbox{Model $Y_2(100, 0.3)$}
  {%
    \includegraphics[width=0.4\textwidth]{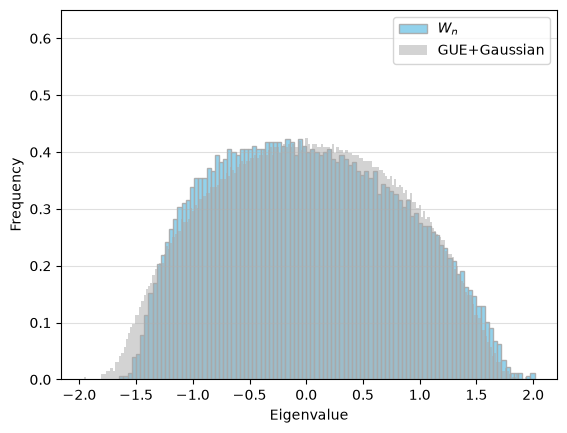}%
  }\hfill
  \subcaptionbox{Model $Y_2(200, 0.3)$}
  {%
    \includegraphics[width=0.4\textwidth]{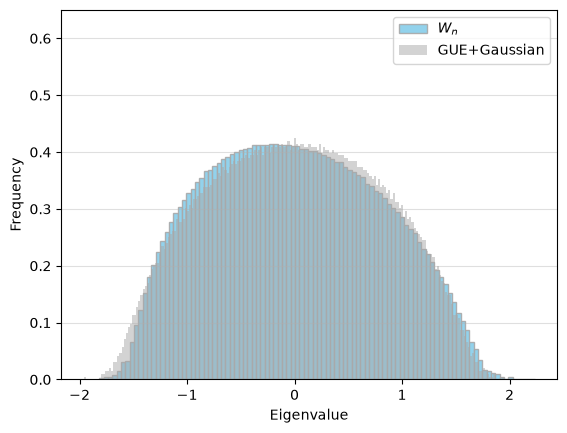}%
  }
  
  \caption{The EED of $\mu_{W_n}$, for different $n=25, 50, 100, 200$, respectively. }
  \label{fig:histgrams}
\end{figure}
\end{example}


\section{Proof of the limiting eigenvalue distribution}\label{sec:EED_proof}

We use the canonical \emph{moment method} to prove \cref{thm:main},
the moment method is widely used in random matrix theory,
we refer the reader to read \cite[Exercise 2.1.16]{MR2760897} for the case of the classical Wigner matrix and \cite[Theorem 3.1]{MR3689342} for the case of the adjacency matrix of $Y_{q+1}(n, p)$.
We first prove the limiting moments converge to the moments of $\mu_{\lap}$ in \cref{sec:moments},
and then we prove that the variance of the moments are bounded in \cref{sec:variance}.
Finally,
we present the proof of \cref{thm:main} in \cref{sec:main_thm_proof}.

\subsection{Limit of the moments}\label{sec:moments}

\begin{proposition}[Moments]\label{prop:moments}
Let 
\[
m_k^{(n)}=\frac{1}{N_q}\EE[\tr(W_n^{k})]
\]
be the scaled expectation of $k$-th moment of $W_n$.
Let $m_k$ be the $k$-th moment of $\mu_{\lap}$.
Then 
\[
\lim_{n\to\infty}m_k^{(n)}=m_k.
\]
\end{proposition}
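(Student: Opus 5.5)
We expand the trace using \cref{eq:E_n_decompose}:
\[
\EE\tr(W_n^k)=n^{-k/2}\sum_{\tau_1,\dots,\tau_k\in\Delta_{q+1}}\EE\Bigl[\prod_{i=1}^k(\xi_{\tau_i}-p)\Bigr]\,\tr\bigl(b_{\tau_1}b_{\tau_1}^\top\cdots b_{\tau_k}b_{\tau_k}^\top\bigr).
\]
Since $\tr(b_{\tau_1}b_{\tau_1}^\top\cdots b_{\tau_k}b_{\tau_k}^\top)=\prod_{i=1}^k\langle b_{\tau_i},b_{\tau_{i+1}}\rangle$ (indices mod $k$), each term is a closed walk $\tau_1\to\tau_2\to\cdots\to\tau_k\to\tau_1$ on $(q+1)$-simplices. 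Here $\langle b_\tau,b_{\tau'}\rangle$ equals $q+2$ if $\tau=\tau'$, equals $\pm1$ if $|\tau\cap\tau'|=q+1$ (they share a $q$-face), and is $0$ otherwise. We group the words $(\tau_1,\dots,\tau_k)$ by the set partition $\pi\in\mathcal P(k)$ of equal indices. Independence and centering force $\pi\in\mathcal P_{\geq2}(k)$, and the expectation is $\prod_{B}\eta_{|B|}$, where $\eta_j$ is the $j$-th central moment of $\Ber(p)$.

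The counting step comes next. Consecutive distinct simplices in the cyclic word must share $q+1$ vertices, so the walk of distinct simplices lives on the Johnson graph $J(n,q+2)$, which has degree $(q+2)(n-q-2)\sim(q+2)n$. We count the free parameters as follows. Choosing the first simplex costs $\binom{n}{q+2}\asymp N_q\, n/(q+2)$. Each step to a \emph{new} simplex costs $O(n)$. Returning to an already-visited simplex costs $O(1)$, or $O(1)$ choices if it is forced by the constraints. The normalization is $n^{-k/2}N_q^{-1}$, so we need exactly $k/2$ factors of $n$ beyond the $N_q$ from the start. For $|\pi|$ blocks the count is at most $N_q\,n^{|\pi|-1+\epsilon}$ up to constants. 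A careful analysis, along the lines of the tree/cycle argument in \cite[Theorem 3.1]{MR3689342}, will show that the leading order is only achieved when $\pi\in\mathcal P_2(k)$, so $k=2r$ and the exponent is $r$. Odd $k$ and blocks of size $\geq3$ are then negligible. The one subtlety is that ``staying put'' ($\tau_i=\tau_{i+1}$) costs no $n$ but yields the weight $q+2$ with no sign, which is what feeds the Gaussian part.

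The key step, and the main obstacle, is to identify for each pairing $\pi\in\mathcal P_2(2r)$ the leading constant, and to show it equals $\eta_2^r$ times a weight matching \cref{thm:moments} with $\sigma^2=\eta_2$ and $s=q+1$; in that case $s+1=q+2$, consistent with the discussion after \cref{thm:main}. We will analyze the chord diagram as follows. An isolated chord $\{a,b\}$ encloses a union of sub-pairings, and the plan is to show that the leading configurations are those in which all simplices labelled by the chords of each connected component of $\Phi(\pi)$ sit in a common ``star'' of a single $q$-face $\sigma$. Indeed, connected non-isolated components generate constraints that force every simplex to contain a common $q$-face; each such simplex is determined by one extra vertex, giving $n$ choices per chord, with all signs squaring to $+1$. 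An isolated chord instead allows the two occurrences of $\tau$ to be visited from either face. Summing over the $q+2$ choices of the $q$-face of $\tau$ through which the enclosed excursion passes then produces exactly the factor $q+2$, whereas a crossing chord pins the face and gives factor $1$.

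To verify this we would first check small cases: $k=2$ gives $(q+2)\eta_2$, matching $m_2=(s+1)\sigma^2$, and $k=4$ should give $\eta_2^2(2(q+2)^2+1)$. After that we would do an induction on $r$, removing an outermost connected component of the chord diagram. We must also show that configurations where consecutive simplices differ by more than one vertex, or where the signs $\pm1$ fail to cancel, are lower order. That cancellation/sign bookkeeping, and ruling out extra leading contributions from walks that change $q$-face within a crossing component, is where we expect most of the work. Combining the cases yields $\lim m^{(n)}_{2r}=\eta_2^r\sum_{\pi\in\mathcal P_2(2r)}(q+2)^{\#\ic(\pi)}$, and odd moments tend to $0$; by \cref{thm:moments} these are the moments of $\mu_{\lap}$.
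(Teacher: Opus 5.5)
Your reduction to pairings is the same as the paper's, and so is your target: $\lim_n m_{2r}^{(n)}=\eta_2^r\sum_{\pi\in\mathcal P_2(2r)}(q+2)^{\#\ic(\pi)}$, to be combined with \cref{thm:moments}. The reduction is also simpler than you suggest. The count is $O(N_q n^{|\pi|})$, not $N_q n^{|\pi|-1+\epsilon}$, so each normalized term is $O(n^{|\pi|-k/2})$. Since $|\pi|\le k/2$, with equality only for pairings, no further analysis is needed.

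The genuine gap is the step you call the main obstacle. The leading constant of each pairing is asserted rather than derived, and the mechanism you sketch for it is wrong. The rigid unit is not a crossing component of $\Phi(\pi)$. It is a $2$-connected block of the graph $G_\pi$ on chords, where two chords are adjacent when they occupy cyclically consecutive positions. Take $\pi=\{\{1,2\},\{3,4\},\{5,6\}\}$:
\begin{itemize}
\item every chord is its own component, yet $G_\pi$ is a triangle;
\item so at leading order the three simplices must share one $q$-face, and no chord has any face freedom;
\item the whole weight $(q+2)^3$ comes from the three ``stay put'' factors $\langle b_\tau,b_\tau\rangle=q+2$;
\item the only face choice, at the second simplex, just cancels the $1/(q+2)$ in $\binom{n}{q+2}\asymp N_q n/(q+2)$.
\end{itemize}
So your rule ``an isolated chord lets the enclosed excursion pass through any of the $q+2$ faces'' holds only for non-adjacent isolated chords. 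Applied to cyclically adjacent chords $\{a,a+1\}$, it would double-count against the self-loop weight.

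What is missing are the four facts the paper establishes.
\begin{enumerate}
\item A leading term needs vertex support $|\tau_1\cup\cdots\cup\tau_r|=q+r+1$. Under this constraint, any cycle among the $\tau_i$ in $J(n,q+2,q+1)$ is a sunflower. To see this, extend the cycle's path to a spanning tree: every step must add a new vertex, and closing the cycle then forces a common $q$-face.
\item Reduce the multigraph $H_\pi$ mod $2$ and decompose it into cycles. Each cycle contributes $\prod_j c_j^2=1$, so the sign is positive and $S_\pi=(q+2)^{\#\loops(H_\pi)}$. This replaces your deferred sign bookkeeping.
\item Each $2$-connected block of $G_\pi$ lies in a single star. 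Hence the only freedom is the choice of $q$-face at each articulation vertex, which gives normalized embedding count $(q+2)^{\#\art(G_\pi)}$.
\item For $r\ge 2$, loops of $H_\pi$ are exactly the cyclically adjacent chords. Articulation vertices of $G_\pi$ are exactly the non-adjacent isolated chords, since deleting $\{a,b\}$ leaves two arcs that are disconnected iff no chord crosses $\{a,b\}$. So the exponents add up to $\#\ic(\pi)$. The case $r=1$ (one vertex, two loops) is checked directly.
\end{enumerate}
Without these facts, your small-case checks and proposed induction on $r$ would have to rediscover exactly this structure. As it stands, the proposal states the answer to the key step rather than proving it.
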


We first prove that the moment $m_k^{(n)}=\frac{1}{N_q} 
\mathbb E\!\left[\operatorname{tr}(W_n^k)\right]$ converges to $0$ if $k$ is odd.
\begin{lemma}\label{lemma:odd_moments}
If $k$ is an odd natural number, 
\[
\lim_{n\to \infty}m_k^{(n)}=0.
\]
\end{lemma}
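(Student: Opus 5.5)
We expand the trace using \cref{eq:E_n_decompose}. Writing $\zeta_\tau=\xi_\tau-p$ (centered, with $|\zeta_\tau|\le 1$), we get
\[
m_k^{(n)}=\frac{1}{N_q n^{k/2}}\sum_{\tau_1,\dots,\tau_k\in\Delta_{q+1}}\EE\bigl[\zeta_{\tau_1}\cdots\zeta_{\tau_k}\bigr]\,\tr\bigl(b_{\tau_1}b_{\tau_1}^\top\cdots b_{\tau_k}b_{\tau_k}^\top\bigr).
\]
The trace factorizes cyclically as $\prod_{i=1}^k\langle b_{\tau_i},b_{\tau_{i+1}}\rangle$ with indices mod $k$. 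Since $\langle b_\tau,b_{\tau'}\rangle$ equals $q+2$ if $\tau=\tau'$, equals $\pm1$ if $|\tau\cap\tau'|=q+1$, and vanishes otherwise, the trace is bounded by $(q+2)^k$. It is nonzero only when consecutive simplices in the cyclic word $(\tau_1,\dots,\tau_k)$ are equal or share a $q$-face.

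By independence and centering, the expectation vanishes unless every distinct simplex in the word appears at least twice. So the word induces a partition $\pi\in\mathcal P_{\ge2}(k)$ of the positions, with the number of distinct simplices $\ell=\#\pi$. Since $k$ is odd, $\pi$ has at least one block of size $\ge3$, hence $\ell\le (k-1)/2$.

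The main step is a counting bound. For a fixed $\pi$ with $\ell$ blocks, the number of admissible words with nonzero trace is $O(n^{q+1+\ell})$. To see this, consider the "walk" through the distinct simplices in order of first appearance. The first simplex has $O(n^{q+2})$ choices. Each new simplex must share a $q$-face with the previous entry, giving $O(n)$ choices, since it adds one new vertex and the face is chosen among $q+2$ options. This gives $O(n^{q+2+(\ell-1)})=O(n^{q+1+\ell})$. Equal consecutive entries do not introduce new simplices, so they cost nothing. Summing over the finitely many $\pi\in\mathcal P_{\ge2}(k)$ (a number depending only on $k$) and using $N_q=\binom{n-1}{q+1}\asymp n^{q+1}$, we obtain
\[
|m_k^{(n)}|\le C_{k,q}\,\frac{n^{q+1+\ell}}{n^{q+1}n^{k/2}}\le C_{k,q}\,n^{(k-1)/2-k/2}=C_{k,q}\,n^{-1/2}\to0.
\]

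The expected obstacle is justifying the $O(n)$ choice per new simplex rigorously. When a new simplex first appears, its predecessor in the cyclic word is an already-chosen simplex, and adjacency to it is forced by the nonvanishing inner product. So I would order the choices along the word $\tau_1,\tau_2,\dots$ and verify that every first occurrence $\tau_j$ with $j\ge2$ is adjacent to $\tau_{j-1}$, which has already been fixed. This gives at most $(q+2)(n-q-1)$ possibilities for $\tau_j$. This bookkeeping is exactly the one later needed for the even moments, so I would set it up in a reusable form.
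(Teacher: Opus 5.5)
Your proposal is correct and follows essentially the same route as the paper. You expand the trace into cyclic products $\prod_i b_{\tau_i}^\top b_{\tau_{i+1}}$, use centering to restrict to partitions in $\mathcal{P}_{\geq 2}(k)$, count admissible words as $O(n^{q+1+\ell})$ by adding one upper-adjacent simplex at a time, and conclude from $\ell\le (k-1)/2$ for odd $k$. Your bookkeeping is a cleaner justification of the paper's estimate $m_k^{(n)}=O(n^{|\pi|-k/2})$, which the paper writes with $\pi$ left free: you note that each first occurrence is adjacent to its already-fixed predecessor in the word, and you bound the sum uniformly over the finitely many $\pi$.
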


\begin{proof}
Recall that 
\[
W_n=\frac{1}{\sqrt{n}}\sum_{\tau\in \Delta_{q+1}}(\xi_\tau-p)b_\tau b_\tau^\top,
\]
since $\xi_\tau\sim\Ber(p)$ are i.i.d.,
the expectation $\EE[\xi_\tau-p]=0$ for all $\tau\in \Delta_{q+1}$,
any partition with a singleton block vanishes,
thus
we can express  
\begin{equation}\label{eq:moments_pair}
\begin{aligned}
m_k^{(n)}=
\EE\left[\frac{1}{N_q}\tr(W_n^k)\right]
&=
\frac{1}{N_q n^{k/2}}
\sum_{\tau_1,\cdots, \tau_k\in \Delta_{q+1}}\EE\left[\prod_{i=1}^k(\xi_{\tau_i}-p)\right]\tr\left(\prod_{i=1}^kb_{\tau_{i}}b_{\tau_{i}}^\top\right)\\
&=
\frac{1}{N_q n^{k/2}}\sum_{\pi\in 
\mathcal{P}_{\geq 2}(k)}
\prod_{S\in \pi}\eta_{|S|}\sum_{\substack{\tau_1\cdots \tau_{|\pi|}\\\textrm{ distinct, connected}\\\textrm{via }q\textrm{-faces}}}\prod_{i=1}^k b_{\tau_{\pi(i)}}^\top b_{\tau_{\pi(i+1)}},
\end{aligned}
\end{equation}
here the indices $i=1,\cdots, k$ read cyclically,
i.e.,
$b_{\tau_{\pi(k+1)}}=b_{\tau_{\pi(1)}}$,
and
\begin{itemize}
    \item $\eta_m$ is the $m$-th centralized moment of $\Ber (p)$;
    \item $\pi(i)$ is the block of partition $\pi$ that contains $i$.
\end{itemize}
We need to evaluate the leading order of $\EE[\tr(W_n^k)]$.
Since $\pi\in \mathcal{P}_{\geq 2}(k)$,
we have $|\pi|\leq \frac{k}{2}$.
We observe that $\prod_{S\in \pi}\eta_{|S|}$ is the product of several moments of $\Ber(p)$,
and $\pi\in \mathcal{P}_{\geq 2}(k)$ is independent of $n$,
hence $\prod_{S\in \pi}\eta_{|S|}=O(1)$.
Besides that,
each term $|b_{\tau_{\pi(i)}}^\top b_{\tau_{\pi(i+1)}}|\leq q+2$, 
thus $\prod_{i=1}^k b_{\tau_{\pi(i)}}^\top b_{\tau_{\pi(i+1)}}=O(1)$.

Other than that,
if $\prod_{i=1}^k b_{\tau_{\pi(i)}}b_{\tau_{\pi(i+1)}}^\top\neq 0$,
then consecutive $(q+1)$-simplices in the sequence $\{\tau_{\pi(1)}, \tau_{\pi(2)},\cdots, \tau_{\pi(k-1)}, \tau_{\pi(k)}\}$ must be equal or upper adjacent,
hence the distinct simplices form a connected walk in the Johnson graph $J(n, q+2, q+1)$.
We first randomly choose one $(q+1)$-simplices,
there are $\binom{n}{q+2}$ options,
we then need to choose $|\pi|-1$ different $(q+1)$-simplices which are consecutively upper adjacent,
in each step, we choose one $q$-face from the previous $(q+1)$-simplices, since there are at most $(q+2)$ different $q$-faces,
the option is $O(1)$,
we then only need to choose one additional element from the remaining elements which are not chosen yet, the order is $O(n)$,
therefore,
the total order of the remaining $|\pi|-1$ steps is $O(n^{|\pi|-1})$ and 
\[
\sum_{\substack{\tau_1\cdots \tau_{|\pi|}\\\textrm{ distinct, connected}\\\textrm{via }q\textrm{-faces}}}\prod_{i=1}^k b_{\tau_{\pi(i)}}^\top b_{\tau_{\pi(i+1)}}=\binom{n}{q+2}\cdot O(n^{|\pi|-1})=O(n^{q+2+|\pi|-1}),
\]
according to \cref{eq:moments_pair} we obtain
\begin{equation}\label{eq:moment_order}
m_k^{(n)}=O(n^{|\pi|-k/2}).
\end{equation}
When $k$ is odd,
$|\pi|<k/2$ and thus $m_k^{(n)}\to 0$ as $n\to \infty$.
\end{proof}
We thus only need to analyze the even moments.
Let $k=2r$ be an even number.
\begin{lemma}\label{lemma:even_moments}
We have 
\begin{equation}\label{eq:tilde_moment_orig}
m_{2r}^{(n)}\sim
\frac{(\eta_{2})^r }{N_qn^r}\sum_{\pi\in \mathcal{P}_2(2r)}\sum_{\substack{\tau_1\cdots \tau_{r}\\\textrm{ distinct, connected}\\\textrm{via }q\textrm{-faces};\\|\tau_1\cup\cdots \cup\tau_r|=q+r+1}}\prod_{i=1}^{2r} b_{\tau_{\pi(i)}}^\top b_{\tau_{\pi(i+1)}},\,\, \textrm{ as } n\to \infty.
\end{equation}
\end{lemma}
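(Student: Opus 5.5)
We start from the expansion \cref{eq:moments_pair} with $k=2r$ and split the sum over $\pi\in\mathcal{P}_{\geq 2}(2r)$ into pairings $\pi\in\mathcal{P}_2(2r)$ and partitions with at least one block of size $\geq 3$. In the second case $|\pi|\leq r-1$. Since $N_q=\binom{n-1}{q+1}\asymp n^{q+1}$, the counting argument in the proof of \cref{lemma:odd_moments} shows that each such $\pi$ contributes $O(n^{|\pi|-r})=O(n^{-1})$. There are only finitely many partitions (their number depends on $r$ alone), so all of these terms vanish in the limit. This leaves the pairings, where $|\pi|=r$ and $\prod_{S\in\pi}\eta_{|S|}=\eta_2^{r}$. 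That gives exactly the prefactor $\eta_2^r/(N_q n^r)$ in \cref{eq:tilde_moment_orig}, with the inner sum running over distinct $\tau_1,\dots,\tau_r$ that are connected via $q$-faces.

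It remains to restrict the inner sum to tuples with $|\tau_1\cup\cdots\cup\tau_r|=q+r+1$. We first note that, for connected distinct simplices, the union has at most $q+2+(r-1)=q+r+1$ vertices. Indeed, order the simplices along a spanning tree of their adjacency structure: each new simplex shares a $q$-face with an earlier one and so adds at most one new vertex. Next we bound the tuples with $|\tau_1\cup\cdots\cup\tau_r|\leq q+r$. Their vertex set has at most $q+r$ elements, so there are $O(n^{q+r})$ choices for it, and for each choice only $O(1)$ ways (depending on $q,r$) to distribute the simplices $\tau_i$ inside it. Each product $\prod_i b^\top_{\tau_{\pi(i)}}b_{\tau_{\pi(i+1)}}$ is bounded by $(q+2)^{2r}$. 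Hence these tuples contribute at most
\[
\frac{\eta_2^r}{N_q n^r}\cdot O(n^{q+r})=O(n^{-1})
\]
for each fixed $\pi$. Since $\mathcal{P}_2(2r)$ is finite, the total error is $o(1)$.

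Finally, we check that the main term is of order one: the number of maximal-union tuples is $\Theta(n^{q+r+1})$, which matches $N_q n^r\asymp n^{q+r+1}$. This justifies writing $\sim$, in the sense that the difference between $m^{(n)}_{2r}$ and the right-hand side of \cref{eq:tilde_moment_orig} is $O(n^{-1})$. If the right-hand side has a nonzero limit, which the subsequent computation will provide, this is also an asymptotic equivalence.

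The main obstacle is making the vertex-counting bound rigorous. One cannot simply reuse the crude walk count from \cref{lemma:odd_moments}. Instead, the argument should be organized as counting over vertex sets of size at most $q+r$, with an $n$-independent number of configurations inside each set. The spanning-tree argument for the upper bound on $|\bigcup\tau_i|$ must also be stated carefully, because the sequence $\tau_{\pi(1)},\dots,\tau_{\pi(2r)}$ revisits simplices.
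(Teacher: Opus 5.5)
Your proposal is correct and follows the same route as the paper. You discard the non-pairing partitions via the $O(n^{|\pi|-r})$ bound from the odd-moment lemma, then discard the tuples whose union has fewer than $q+r+1$ vertices. Your vertex-set count (choose at most $q+r$ vertices, then $O(1)$ configurations inside them) is a more careful justification of the paper's bound $O(n^{|\tau_1\cup\cdots\cup\tau_r|})$. Your remark that ``$\sim$'' needs a nonzero limit of the main term, later supplied by the positivity of $S_\pi$, makes explicit what the paper leaves implicit.
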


\begin{proof}
 According to \cref{eq:moment_order} in the proof of \cref{lemma:odd_moments},
    \[
    |\pi|\leq r \textrm{ and }
    m_k^{(n)}=O(n^{|\pi|-r}).
    \]
 Hence, a partition $\pi$ can contribute to the leading term of $\EE[\tr(W_n^{2r})]$ if and only if $|\pi|=r$.
    On the other hand, since the partition $\pi$ has no singleton block,
    we obtain that $\pi$ must be a pairing partition,
therefore
\[
m_{2r}^{(n)}
\sim
\frac{(\eta_{2})^r }{N_qn^r}\sum_{\pi\in \mathcal{P}_2(2r)}\sum_{\substack{\tau_1\cdots \tau_{r}\\\textrm{ distinct, connected}\\\textrm{via }q\textrm{-faces}}}\prod_{i=1}^{2r} b_{\tau_{\pi(i)}}^\top b_{\tau_{\pi(i+1)}} \,\,\textrm{ as } n\to \infty.
\]

In addition to that,
we observe that if $\tau_1\cdots, \tau_r$ are $r$ distinct $(q+1)$-simplices which are consecutively upper adjacent,
then the support size $|\tau_1\cup\cdots\cup\tau_r|\leq q+r+1$,
and
\begin{equation}\label{eq:sum_order}
\sum_{\substack{\tau_1\cdots \tau_{r}\\\textrm{ distinct, connected}\\\textrm{via }q\textrm{-faces}}}\prod_{i=1}^{2r} b_{\tau_{\pi(i)}}^\top b_{\tau_{\pi(i+1)}}=O(n^{|\tau_1\cup\cdots\cup\tau_r|})\leq O(n^{q+r+1}).
\end{equation}
Thus only when $|\tau_1\cup\cdots\cup\tau_r|= q+r+1$ can \cref{eq:sum_order} contribute to the leading term of $m_{2r}^{(n)}$.
\end{proof}

We cyclically order $2r$ elements 
$1\rightarrow2\rightarrow\cdots\rightarrow2r\rightarrow 1$.
Let $\pi\in \mathcal{P}_2(2r)$.
We define a multi-edge graph whose vertices are all the pairing blocks,
and draw an edge $\pi(i)\sim\pi(i+1)$ for each $i=1,\cdots 2r$.
We denote the resulting graph as $H_\pi$.
The graph $H_{\pi}$ is a multi-edge graph containing $2r$ edges in total,
the degree of each vertex is $4$, 
and self-loop is allowed.
We also denote by $G_\pi$ the simple graph associated with $H_\pi$,
i.e.,
$G_\pi$ is obtained from $H_\pi$ by removing all self-loops and merging  multi-edges into one edge.

\begin{example}\label{ex:graph_H_pi}
    If $r=3$,
    and $\pi=\{\{1, 3\}, \{2, 6\}, \{4, 5\}\}$,
    the graphs $H_\pi$ and $G_\pi$ are as follows:
\tikzset{
    partition_circle/.style={draw=gray!50, thin, fill=white},
    vertex/.style={circle, draw=black, fill=black, inner sep=1.2pt},
    label_node/.style={font=\footnotesize},
    chord/.style={thick, black!90}
}

\newcommand{\drawPairing}[2]{%
    \begin{tikzpicture}[baseline=(current bounding box.center)]
        \def\r{1.0} 
        \draw[partition_circle] (0,0) circle (\r);
        
        \foreach \i in {1,...,6} {
            \pgfmathsetmacro{\angle}{150 - (\i-1)*60}
            \node[vertex] (v\i) at (\angle:\r) {};
            \node[label_node] at (\angle:\r+0.25) {\i};
        }
        
        \foreach \u/\v in {#2} {
            \draw[chord] (v\u) -- (v\v);
        }
    \end{tikzpicture}
}

\begin{figure}[H]
    \centering
    
    \tikzset{
        vertex/.style={
            circle, 
            draw=cyan!70!black, 
            fill=cyan!10, 
            thick, 
            minimum size=1.2cm, 
            inner sep=0pt
        },
        edge/.style={
            thick, 
            draw=gray!80!black
        }
    }

    \begin{subfigure}[b]{0.45\textwidth}
        \centering
        \begin{tikzpicture}
            \node[vertex] (A) at (0, 3) {$\{1, 3\}$};
            \node[vertex] (B) at (-2, 0) {$\{2, 6\}$};
            \node[vertex] (C) at (2, 0) {$\{4, 5\}$};

            \draw[edge] (A) to[bend right=30] (B);
            \draw[edge] (A) to (B);
            \draw[edge] (A) to[bend left=30] (B);

            \draw[edge] (B) -- (C);
            \draw[edge] (C) -- (A);

            \path[edge] (C) edge [out=-45, in=45, loop, looseness=6] (C);
        \end{tikzpicture}
        \caption{The multi-edge graph $H_\pi$.}
    \end{subfigure}
    \hfill 
    \begin{subfigure}[b]{0.45\textwidth}
        \centering
        \begin{tikzpicture}
            \node[vertex] (A) at (0, 3) {$\{1, 3\}$};
            \node[vertex] (B) at (-2, 0) {$\{2, 6\}$};
            \node[vertex] (C) at (2, 0) {$\{4, 5\}$};

            \draw[edge] (A) -- (B);
            \draw[edge] (B) -- (C);
            \draw[edge] (C) -- (A);
            
            \path (C) edge [draw=none, out=-45, in=45, loop, looseness=6] (C);
            
        \end{tikzpicture}
        \caption{The simple graph $G_\pi$.}
    \end{subfigure}
    
    \caption{Graphs $H_\pi$ and $G_\pi$ in \cref{ex:graph_H_pi}.}
    \label{fig:graph-comparison}
\end{figure}
\end{example}

It is obvious to see that the value of the product $\prod_{i=1}^{2r}b_{\tau_{\pi(i)}}^\top b_{\tau_{\pi(i+1)}}$ relies on both the structure of graph $H_\pi$ and the $(q+1)$-simplices $\tau_1,\cdots, \tau_{2r}$.
We need to use the Johnson graph to analyze the structure of the underlying simple graph $G_\pi$.
Before that,
we shall first recall the definition of Johnson Graphs (see \cite{MR1829620}) below.
\begin{definition}
The Johnson graph $J(n, k, l)$ is the graph whose vertices are the $k$-element subsets of an $n$-element set,
where two vertices are adjacent if their intersection has size $l$.
\end{definition}

We can identify each $(q+1)$-simplex in $\LL_n$ as a vertex in $J(n, q+2, q+1)$,
and two $(q+1)$-simplices in $\LL_n$ are upper adjacent  if and only if their corresponding vertices in $J(n, q+2, q+1)$ are adjacent.

Let $\pi\in \mathcal{P}_2(2r)$,
    and $H_\pi$, $G_\pi$ be the multi-edge graph and simple graph associated with $\pi$.
    We embed $G$ into $J(n, q+2, q+1)$ and denote by $\tilde{G}\subset J(n, q+2, q+1)$ as the image.
    Let $\tau_1,\cdots, \tau_r$ be  the vertices in $\tilde{G}$, 
    we can regard each vertex $\tau_i$ as a $(q+1)$-simplices and let $b_{\tau_i}$ be the associated boundary.
\begin{lemma}\label{lemma:S_pi_positive}
    Let $\#\loops(H_\pi)$ be the number of self-loops in $H_\pi$.
    Consider the term
    \[
    S_{\pi}(\tilde{G})=\prod_{i=1}^{2r}b_{\tau_{\pi(i)}}^\top b_{\tau_{\pi(i+1)}}.
    \]
  If $|\tau_1\cup\cdots \cup\tau_r|=q+r+1$,
  we have
    \begin{equation}\label{eq:S_pi_embedding}
    S_{\pi}(\tilde{G}) =(q+2)^{\#\loops(H_\pi)}.
    \end{equation}
\end{lemma}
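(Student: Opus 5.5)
Split the product $S_\pi(\tilde G)=\prod_{i=1}^{2r} b_{\tau_{\pi(i)}}^\top b_{\tau_{\pi(i+1)}}$ according to the edges of $H_\pi$. An edge $\pi(i)\sim\pi(i+1)$ is either a self-loop, where $\pi(i)=\pi(i+1)$, or an edge between distinct vertices. For a self-loop the factor is $b_\tau^\top b_\tau=\|b_\tau\|^2=q+2$, since the boundary of a $(q+1)$-simplex has exactly $q+2$ nonzero entries, each equal to $\pm1$. Loops therefore contribute exactly $(q+2)^{\#\loops(H_\pi)}$. It remains to show that the product over non-loop edges equals $1$.

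\textbf{Step 1: the support condition forces a tree.} Since $G_\pi$ has $r$ vertices, it is connected (the cyclic walk $\pi(1),\pi(2),\dots,\pi(2r)$ visits every block). Assume $|\tau_1\cup\cdots\cup\tau_r|=q+r+1$. Build the vertex set along a spanning tree of $G_\pi$. Each time we step to a new adjacent simplex, at most one new element is added. Starting from $q+2$ elements and taking $r-1$ steps, reaching $q+r+1$ elements forces every step to add a genuinely new element.

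I expect this to imply that $\tilde G$ has no edges beyond a tree. Suppose some extra edge $\tau_a\sim\tau_b$ of $G_\pi$ were not in the spanning tree. Order the simplices by the BFS order of the tree, and say $\tau_b$ comes later and introduced a new vertex $v_b$ that lies in no earlier simplex. Then $\tau_b=\sigma\cup\{v_b\}$, where $\sigma$ is a $q$-face of its tree parent. Being adjacent to $\tau_a$ means $|\tau_a\cap\tau_b|=q+1$. Since $v_b\notin\tau_a$, this forces $\sigma\subset\tau_a$. Carrying out this argument cleanly, possibly showing that such configurations lose a power of $n$ and so do not occur at the leading order implicitly intended in the lemma, is the \textbf{main obstacle}.

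\textbf{Step 2: the sign of each non-loop factor.} Whenever $\tau_a,\tau_b$ are upper adjacent, $b_{\tau_a}^\top b_{\tau_b}=\pm1$, namely the product of the incidence signs of the common $q$-face in the two simplices. In the tree case, each edge of $G_\pi$ appears in $H_\pi$ with even multiplicity. This is because the closed walk $\pi(1)\to\cdots\to\pi(2r)\to\pi(1)$ on a tree (ignoring loops) traverses each tree edge an even number of times. Hence the product of the signs is $(\pm1)^{\text{even}}=1$.

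If Step 1 instead only yields that every edge of $\tilde G$ corresponds to distinct common faces, fall back on a parity argument. A closed walk crosses every edge cut an even number of times, so for any cycle in $G_\pi$ relate the signs through orientations. Concretely, I would first try to prove that $G_\pi$ must be a tree under the support condition, by counting the number of free elements $n$-choices, and then conclude $S_\pi(\tilde G)=(q+2)^{\#\loops(H_\pi)}$.
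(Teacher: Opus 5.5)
There is a genuine gap: Step 1 is false, and the configurations you hope to exclude contribute at leading order. Under the support hypothesis $G_\pi$ need not be a tree. Take $r=3$ and $\pi=\{\{1,2\},\{3,4\},\{5,6\}\}$. Then $H_\pi$ has a loop at each block and exactly one edge between each pair of blocks, so $G_\pi$ is a triangle. It is realized by the sunflower $\tau_i=\sigma\cup\{v_i\}$ with $|\sigma|=q+1$, $i=1,2,3$, whose support is $q+4=q+r+1$. (The paper's own example $\{\{1,3\},\{2,6\},\{4,5\}\}$ also gives a triangle.)

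Since the support is maximal, no power of $n$ is lost. In any case the lemma is a deterministic statement under the support hypothesis, so an appeal to ``leading order'' cannot discard such cases. In this example the closed walk traverses each triangle edge exactly once, so your even-multiplicity argument in Step 2 does not apply. Your edge-cut fallback cannot work by parity alone either. Sign products around cycles of upper-adjacent simplices can be $-1$: for $q=0$, the edges $\{1,2\},\{2,3\},\{1,3\}$ give $(-1)(+1)(+1)$. That cycle has support only $q+u$, so the support hypothesis must enter the sign argument, not just the counting.

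The missing idea is that your Step 1 observation yields a \emph{sunflower}, not a contradiction. You showed that a closing edge forces the face $\sigma$ shared by $\tau_b$ and its parent to lie in $\tau_a$. Iterating this around a cycle, using that each newly added vertex is absent from earlier simplices, shows that all simplices on the cycle share one $q$-face.

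The paper's proof runs as follows.
\begin{enumerate}
\item Since $b_{\tau_a}^\top b_{\tau_b}$ is symmetric, parallel edges of $H_\pi$ can be reduced mod $2$.
\item The resulting loopless simple graph has all degrees even, because $H_\pi$ is $4$-regular. By Veblen's theorem it decomposes into edge-disjoint simple cycles.
\item A cycle of $u$ distinct simplices is a connected subfamily. Maximality of the total support therefore forces its support to be exactly $q+u+1$, so the cycle is a sunflower with a common face $\sigma$.
\item Let $c_j=\pm1$ be the incidence sign of $\sigma$ in $\tau_{C_j}$. Each factor on the cycle is $c_jc_{j+1}$, so the cycle contributes $\prod_j c_j^2=1$.
\end{enumerate}
Your tree argument is the special case in which no cycles survive the mod-$2$ reduction. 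You need this sunflower step to handle the general case.
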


\begin{proof}
We observe that
\[
b_{\tau_{\pi(i)}}^\top b_{\tau_{\pi(i+1)}}=
\begin{cases}
    q+2 & \textrm{ if } \tau_{\pi(i)}=\tau_{\pi(i+1)};\\
    \pm 1 & \textrm{ if } \tau_{\pi(i)} \textrm{ and } \tau_{\pi(i+1)} \textrm{ are upper adjacent};\\
    0 & \textrm{ else}.
\end{cases}
\]
Therefore,
each self-loop in $H_\pi$  contributes a factor $(q+2)$ to $S_{\pi}(\tilde{G})$,
and each edge between two different vertices in $H_\pi$ contributes a factor $\pm 1$ to $S_{\pi}(\tilde{G})$.  
Hence 
\[
S_\pi(\tilde{G})=\pm (q+2)^{\#\loops(H_\pi)},
\]
and we only need to prove $S_\pi(\tilde{G})$ is positive.

Observe that each edge between two different vertices $\tau_{\pi(i)}$ and $\tau_{\pi(i+1)}$ corresponds to either the term $\tau_{\pi(i)}^\top \tau_{\pi(i+1)}$ or the term $\tau_{\pi(i+1)}^\top \tau_{\pi(i)}$ in $S_\pi(\tilde{G})$.
Notice that
$\tau_{\pi(i)}^\top \tau_{\pi(i+1)}=\tau_{\pi(i+1)}^\top \tau_{\pi(i)}=\pm 1$,
that is,
the even number of mutli-edges between two different vertices in $H_\pi$ contribute to $1$ in $S_\pi(\tilde{G})$.
Hence,
we can modulo all the multi-edges by $2$, 
that is,
if there are even number multi-edges between $\tau_{\pi(i)}$ and $\tau_{\pi(i+1)}$,
we simply remove all those edges,
if there are odd number multi-edges between $\tau_{\pi(i)}$ and $\tau_{\pi(i+1)}$,
we only keep one single edge.
Recall that the mult-edge graph $H_\pi$ is 4-regular,
up to these operations,
the degree of each vertex in the remaining graph is still even.
Thus, the graph can be decomposed into edge-disjoint cycles (by Veblen's Theorem),
and we only need to consider each cycle.

However,
if $C$ is such a cycle that contains $u$ different  vertices 
\[\tau_{C_1} - \tau_{C_2} -\cdots - \tau_{C_u}-\tau_{C_1},\]
we must have $|\tau_{C_1}\cup\cdots \cup \tau_{C_u}|=q+u+1$,
otherwise,
it would be contrasted with the assumption that $|\tau_1\cup\cdots \cup\tau_r|=q+r+1$,
hence they must share a common $q$-face $\sigma$.
We can then denote each $(q+1)$-simplices as $\tau_{{C}_j}=\sigma\cup \{v_j\}$.
Let $c_{j}$ be the boundary coefficient of the common $q$-face $\sigma$ in the $(q+1)$-simplex $\tau_{C_{j}}$,
then 
\[
b_{\tau_{C_{j}}}^\top b_{\tau_{C_{j+1}}}=c_{j} c_{j+1},
\]
therefore,
in this cycle $C$,
we have
\[
\prod_{j=1}^u b_{\tau_{C_{j}}}^\top b_{\tau_{C_{j+1}}} = \prod_{j=1}^u c_{j} c_{j+1}=
\prod_{j=1}^u (c_{j})^2=1.
\]
That is,
each cycle contributes to $1$ in $S_\pi(\tilde{G})$,
and therefore,
\cref{eq:S_pi_embedding} is proved.
\end{proof}

According to \cref{lemma:S_pi_positive},
the value of $S_\pi(\tilde{G})$ is independent of the embedding map $G_{\pi}\hookrightarrow J(n, q+2, q+1)$ provided that $|\tau_1\cup\cdots \cup\tau_r|=q+r+1$,
therefore,
we can simplify the notation and denote by
\[
S_\pi := (q+2)^{\#\loops(H_\pi)}.
\]
Let 
\[
E_\pi(n)=\#\left\{\varphi: G_\pi\hookrightarrow J(n, q+2, q+1) \textrm{ is an embedding } \Big| |\varphi(1)\cup\varphi(2)\cdots \cup \varphi(r)|=q+r+1\right\},
\]
that is,
$E_\pi(n)$ is the number of embeddings of $G_\pi$ into $J(n, q+2, q+1)$ requiring that the support of the $r$ vertices in $J(n, q+2, q+1)$ is equal to $q+r+1$.
The \cref{eq:tilde_moment_orig} can be simplified as
\begin{equation}\label{eq:tilde_moment_simplify}
m_{2r}^{(n)}\sim
\frac{(\eta_{2})^r }{N_qn^r}\sum_{\pi\in \mathcal{P}_2(2r)}S_\pi E_\pi(n).
\end{equation}

\begin{lemma}\label{lemma:even_moments_limit}
We have
\begin{equation}\label{eq:moment_limit}
\lim_{n\to\infty}m_{2r}^{(n)}=(\eta_2)^r
\frac{(q+1)!}{(q+r+1)!}\sum_{\pi\in \mathcal{P}_2(2r)}S_\pi E_\pi(q+r+1).
\end{equation}
\end{lemma}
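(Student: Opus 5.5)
The plan is to start from the asymptotic formula \cref{eq:tilde_moment_simplify}. Since $\mathcal{P}_2(2r)$ is a finite set independent of $n$ and $S_\pi=(q+2)^{\#\loops(H_\pi)}$ is independent of $n$, it suffices to compute, for each fixed $\pi\in\mathcal{P}_2(2r)$, the asymptotics of $E_\pi(n)$. We then divide by $N_q n^r$ with $N_q=\binom{n-1}{q+1}\sim n^{q+1}/(q+1)!$.

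The key step is to factor $E_\pi(n)$ according to the support. Every admissible embedding $\varphi$ has image vertices $\varphi(1),\dots,\varphi(r)$ whose union $U=\varphi(1)\cup\cdots\cup\varphi(r)$ is a subset of $[n]$ of size exactly $q+r+1$. Conversely, given any $(q+r+1)$-subset $U\subseteq[n]$, the admissible embeddings with support $U$ are exactly the embeddings of $G_\pi$ into the Johnson graph $J(U,q+2,q+1)\cong J(q+r+1,q+2,q+1)$ whose images have full support $U$. Two facts make this work:
\begin{itemize}
\item adjacency in $J(n,q+2,q+1)$ between subsets of $U$ is the same as adjacency in $J(U,q+2,q+1)$;
\item the support condition is intrinsic to $U$.
\end{itemize}
Relabeling $U$ order-preservingly to $[q+r+1]$ is a bijection of embeddings. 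This should preserve everything relevant, including the boundary signs, although signs are irrelevant anyway since $S_\pi$ was shown in \cref{lemma:S_pi_positive} to be sign-free. Hence
\[
E_\pi(n)=\binom{n}{q+r+1}E_\pi(q+r+1),
\]
an exact identity for every $n\geq q+r+1$. I would check carefully that ``embedding'' in the definition of $E_\pi$ means an injective graph homomorphism of $G_\pi$ into the Johnson graph with distinct images, so that the count is genuinely determined by $U$ alone.

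Then
\[
\frac{E_\pi(n)}{N_q n^r}=\frac{\binom{n}{q+r+1}}{\binom{n-1}{q+1}n^r}\longrightarrow \frac{(q+1)!}{(q+r+1)!},
\]
since the numerator is $\sim n^{q+r+1}/(q+r+1)!$ and the denominator is $\sim n^{q+1}n^r/(q+1)!$. Summing over the finitely many $\pi$ and multiplying by $(\eta_2)^r S_\pi$ gives \cref{eq:moment_limit}, using that $\sim$ in \cref{eq:tilde_moment_simplify} means the difference is $o(1)$ (the discarded terms were shown to be of lower order $O(n^{-1})$ in \cref{lemma:even_moments}).

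The main obstacle is justifying the passage from \cref{eq:tilde_moment_orig} to \cref{eq:tilde_moment_simplify}. The inner sum over distinct connected $\tau_1,\dots,\tau_r$ with full support must correspond exactly to embeddings of $G_\pi$. Tuples realizing extra adjacencies contribute $0$ or are counted consistently, so one must confirm that the product is nonzero precisely on homomorphic images of $H_\pi$ and equals $S_\pi$ there. I would take that reduction as given from the preceding discussion and \cref{lemma:S_pi_positive}, and concentrate the proof on the exact factorization $E_\pi(n)=\binom{n}{q+r+1}E_\pi(q+r+1)$ and the elementary limit.
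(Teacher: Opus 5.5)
Your proposal is correct and is essentially the paper's proof. Like the paper, you use the exact identity $E_\pi(n)=\binom{n}{q+r+1}E_\pi(q+r+1)$, which you justify via the support set $U$ and an order-preserving relabeling where the paper just says ``observe'', together with $N_q\sim n^{q+1}/(q+1)!$ and $\binom{n}{q+r+1}\sim n^{q+r+1}/(q+r+1)!$ inside \cref{eq:tilde_moment_simplify}. Two small remarks: your displayed ratio dropped a factor and should read $E_\pi(n)/(N_q n^r)\to\frac{(q+1)!}{(q+r+1)!}E_\pi(q+r+1)$; and reading ``embedding'' as an injective, not necessarily induced, homomorphism is the right choice, because tuples with extra adjacencies contribute $S_\pi$, not $0$ (e.g.\ a path $G_\pi$ can land on a sunflower).
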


\begin{proof}
    Recall that the second centralized moment of $\Ber(p)$ is $\eta_{2}=p(1-p)$,
    observe that 
    \[
    E_\pi(n)=\binom{n}{q+r+1}E_\pi(q+r+1),
    \]
    and
    \[
    N_q=\binom{n-1}{q+1}\sim \frac{n^{q+1}}{(q+1)!}, \quad 
    \binom{n}{q+r+1}\sim\frac{n^{q+r+1}}{(q+r+1)!},
    \]
    hence \cref{eq:moment_limit} is proved from \cref{eq:tilde_moment_simplify} and \cref{lemma:even_moments}.
\end{proof}

\begin{definition}
    We say $(q+1)$-simplices $\tau_1,\cdots, \tau_k$ have a \emph{sunflower} structure if they share a common $q$-face $\sigma$,
    that is,
    $\sigma=\tau_1\cap \cdots \cap \tau_k$.
\end{definition}

\begin{definition}
    Let $G$ be a simple connected graph,
    that is, $G$ does not have self-loops and multi-edges.
    We say a vertex $v$ is an \emph{articulation vertex} if $G$ becomes disconnected by removing $v$ and the edges connected with $v$. 
\end{definition}

\begin{lemma}\label{lemma:articulation}
    Let $G_\pi$ be the simple graph associated with $\pi\in \mathcal{P}_2(2r)$.
    Let $\#\art(G_\pi)$ be the number of articulation vertices in $G_\pi$.
    We have the equality
    \[
    \frac{(q+1)!}{(q+r+1)!}E_\pi(q+r+1)=(q+2)^{\#\art(G_\pi)}.
    \]
\end{lemma}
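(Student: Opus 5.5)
The plan is to count the admissible embeddings directly. Put $m=q+r+1$ and write $\varphi(1),\dots,\varphi(r)$ for the images of the vertices of $G_\pi$, as in the definition of $E_\pi$. Because the $r$ images are distinct $(q+2)$-subsets whose union is exactly $[m]$, the support is as small as connectivity allows. Along any spanning tree of $G_\pi$, each new simplex adjoins exactly one new element. I will read an embedding as an injective map that sends edges of $G_\pi$ to edges of $J(m,q+2,q+1)$, without requiring it to be induced. This reading is needed to get the power $(q+2)$ exactly.

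\textbf{Step 1 (rigidity of blocks).} Decompose $G_\pi$ into its biconnected components ("blocks") via the block--cutvertex tree. I first show that in any admissible $\varphi$, all vertices of a block are sent to a single sunflower, i.e.\ they share one common $q$-face $\sigma_B$. For a cycle this is exactly the argument already used in the proof of \cref{lemma:S_pi_positive}: a cycle on $u$ simplices must have support exactly $q+u+1$, which forces a common $q$-face. For a general block I use an ear decomposition. Each ear is a path whose endpoints already lie in the sunflower of $\sigma_B$. The minimal-support condition forces each internal vertex of the ear to add one fresh element while keeping $\sigma_B$, because otherwise closing the ear would create a cycle with a deficient support count. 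Conversely, a block sent into one sunflower can be realized with any choice of $\sigma_B$ and any assignment of distinct fresh elements.

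\textbf{Step 2 (counting).} Root the block--cutvertex tree at a simplex $\tau_0$, and choose its image as an arbitrary $(q+2)$-subset of $[m]$, giving $\binom{m}{q+2}$ choices. Process the blocks in tree order. Each block is attached at an already-placed vertex $\tau$, and its face $\sigma_B$ is one of the $q+2$ $q$-faces of $\varphi(\tau)$, independently of the other blocks. The remaining $r-1$ simplices each receive one distinct fresh element of $[m]\setminus\varphi(\tau_0)$, giving $(r-1)!$ orderings. Hence, with $b$ the number of blocks,
\[
E_\pi(m)=\binom{m}{q+2}(r-1)!\,(q+2)^{b},
\qquad
\frac{(q+1)!}{(q+r+1)!}E_\pi(m)=(q+2)^{b-1}.
\]
The step I must verify here is that distinct choices give distinct embeddings and that every embedding arises this way. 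Both follow from Step 1 and the uniqueness of the fresh element of each simplex.

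\textbf{Step 3 ($b-1=\#\art(G_\pi)$).} In general $b-1=\sum_{v\in\art}(\beta(v)-1)$, where $\beta(v)$ is the number of blocks containing $v$. So it suffices to show $\beta(v)=2$ for every articulation vertex. Here the specific origin of $G_\pi$ enters. The multigraph $H_\pi$ is traced by the closed walk $\pi(1)\to\pi(2)\to\cdots\to\pi(2r)\to\pi(1)$, and each vertex has degree $4$, i.e.\ the walk visits it twice. Each component of $G_\pi-v$ is entered from $v$ and left back to $v$ by that closed walk, and this consumes two of the four edge-ends at $v$. Self-loops at $v$ use edge-ends but never reach a component. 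Hence $G_\pi-v$ has at most two components, so $\beta(v)\le 2$, and $\beta(v)=2$ when $v$ is an articulation vertex. This gives the claimed identity.

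\textbf{Main obstacle.} I expect the main obstacle to be Step 1, namely establishing cleanly that a $2$-connected block cannot be realized in any way other than a single sunflower. The alternative in the Johnson graph is a clique of subsets of a common $(q+2)$-set. The ear-decomposition induction with the support count is the route I would try first. The other delicate point is the non-induced convention: the independence of the face choices for the two blocks at an articulation vertex relies on it.
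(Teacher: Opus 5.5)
Your proposal is correct and is essentially the paper's argument: blocks of $G_\pi$ are forced into sunflowers by the extremal support condition, there are $\binom{q+r+1}{q+2}$ choices for the root, $(r-1)!$ assignments of fresh elements, and one factor $q+2$ per block, i.e.\ $(q+2)^{1+\#\art(G_\pi)}$; your ear decomposition, the non-induced reading of ``embedding'' (forced anyway, since a sunflower is a clique in $J(n,q+2,q+1)$), and the proof of $\beta(v)=2$ from the degree-$4$ closed walk in $H_\pi$ make explicit what the paper only asserts, though note that $q+r+1$ is the \emph{largest} support connectivity allows, not the smallest, which is how you actually use it. As in the paper, $r\ge 2$ is tacitly needed: for $r=1$ one has $E_\pi(q+2)=1$, so the left-hand side equals $1/(q+2)\neq 1$ and the statement itself fails in that degenerate case.
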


\begin{proof}
Let $\tilde{G}_\pi\subset J(q+r+1, q+2, q+1)$ be an embedding image of $G_\pi$.
There are $r$ vertices in $\tilde{G}_\pi$,
we denote them as $\{\tau_1,\cdots, \tau_r\}$,
and we regard each vertex $\tau_i$ as a $(q+1)$-simplex, and two vertices $\tau_i$, $\tau_j$ are adjacent in $\tilde{G}_\pi$ if and only if they are upper adjacent in $\LL_n$.

Since there are only $q+r+1$ elements in the whole underlying space,
every time adding one new element must creating precisely one new $(q+1)$-simplices,
hence the structure of $\tilde{G}_\pi$ must be some sunflower structures connected by some articulation vertices,
and therefore,
only the articulation vertices in $G_\pi$ have the freedom, since each $(q+1)$-simplex has $(q+2)$ different $q$-simplices,
the freedom is $(q+2)$,
hence once the first vertex $\tau_1$ and the edge $\tau_1\sim \tau_2$ are fixed,
there are $(q+2)^{\#\art(G_\pi)}$ options,
we then only need to consider how many options to choose $\tau_1$ and the edge $\tau_1\sim \tau_2$.

We first choose the vertex $\tau_1$.
There are $\binom{q+r+1}{q+2}$ different options,
then we need to choose a $q$-face of $\tau_1$ which is also a $q$-face of $\tau_2$, 
since there are $(q+2)$ different $q$-faces,
the options are $\binom{q+2}{1}$.
once we fixed $\tau_1$,
there are $r-1$ vertices remained to be distributed,
there are $(r-1)!$ different methods,
hence in total 
\[
E_\pi(q+r+1)=
\binom{q+r+1}{q+2}\binom{q+2}{1}(r-1)!(q+2)^{\#\art(G_\pi)}=
\frac{(q+r+1)!}{(q+1)!}(q+2)^{\#\art(G_\pi)}.
\]

\end{proof}

\begin{example}\label{ex:flowers}
Let $G_\pi$ be a graph as in \cref{fig:flowers},
there are $8$ vertices in total,
by our definition, there are $2$ articulation vertices as colored in red.
Let $q=1$,
we embed the graph $G_\pi$ into $J(10, 3, 2)$,
since each articulation vertex has $3$ options to choose,
if we fix the initial $2$-simplex,
for example,
if we fix one articulate vertex in the Johnson graph,
there are in total $3^2$ possible embeddings,
we list all of them in \cref{fig:3x3_grid},
in each subfigure the red $2$-simplices correspond to the articulation vertices in $G_\pi$.
    \begin{figure}[H]
    \centering
        \begin{tikzpicture}[thick]
            \draw (0,1) -- (-1,0) -- (0,-1) -- (1,0) -- cycle;
            
            \draw (3,1) -- (2,0) -- (3,-1) -- (4,0) -- cycle;
            
            \draw (1,0) -- (2,0);

            \foreach \x/\y in {0/1, -1/0, 0/-1, 1/0, 3/1, 2/0, 3/-1, 4/0}
                \fill (\x,\y) circle (2pt);
            \fill[blue!60] (0,1) circle (2pt);
            \fill[blue!60] (-1,0) circle (2pt);
            \fill[blue!60] (0,-1) circle (2pt);
            \fill[blue!60] (3,1) circle (2pt);
            \fill[blue!60] (3,-1) circle (2pt);
            \fill[blue!60] (4,0) circle (2pt);

            \fill[red!60] (1,0) circle (2pt);
            \fill[red!60] (2,0) circle (2pt);
        \end{tikzpicture}%
    \caption{The simple graph $G_\pi$ in \cref{ex:flowers}.}
    \label{fig:flowers}
\end{figure}
    \begin{figure}[H]
    \centering
    \tikzset{
        vertex/.style={circle, fill=black, inner sep=1.5pt},
        hinge/.style={ultra thick, black, line cap=round},
        face1/.style={fill=blue!40, draw=blue!80!black, thick, opacity=0.55, line join=round},
        face2/.style={fill=red!40, draw=red!80!black, thick, opacity=0.55, line join=round},
        bridge/.style={fill=purple!40, draw=purple!80!black, thick, opacity=0.65, line join=round}
    }

    \begin{subfigure}[b]{0.32\textwidth}
        \centering
        \resizebox{\linewidth}{!}{%
            \begin{tikzpicture}
                \coordinate (H1_T) at (0, 1.5);
                \coordinate (H1_B) at (0, -1.5);
                \coordinate (H2_T) at (3, 1.5);
                \coordinate (H2_B) at (3, -1.5);

                \coordinate (P1_1) at (-1.5, 1.2);
                \coordinate (P1_2) at (-2.5, 0);
                \coordinate (P1_3) at (-1.5, -1.5);
                \coordinate (P2_1) at (4.5, 1.2);
                \coordinate (P2_2) at (5.5, 0);
                \coordinate (P2_3) at (4.5, -1.5);

                \filldraw[face1] (H1_T) -- (H1_B) -- (P1_1) -- cycle;
                \filldraw[face1] (H1_T) -- (H1_B) -- (P1_2) -- cycle;
                \filldraw[face1] (H1_T) -- (H1_B) -- (P1_3) -- cycle;
                \filldraw[bridge] (H1_T) -- (H1_B) -- (H2_T) -- cycle;
                \filldraw[bridge] (H2_T) -- (H2_B) -- (H1_B) -- cycle;
                \filldraw[face1] (H2_T) -- (H2_B) -- (P2_1) -- cycle;
                \filldraw[face1] (H2_T) -- (H2_B) -- (P2_2) -- cycle;
                \filldraw[face1] (H2_T) -- (H2_B) -- (P2_3) -- cycle;

                \draw[hinge] (H1_T) -- (H1_B);
                \draw[hinge] (H2_T) -- (H2_B);
                \draw[thick, purple!80!black, opacity=0.8] (H1_B) -- (H2_T);

                \node[vertex] at (H1_T) {};
                \node[vertex] at (H1_B) {};
                \node[vertex] at (H2_T) {};
                \node[vertex] at (H2_B) {};
                \node[vertex] at (P1_1) {};
                \node[vertex] at (P1_2) {};
                \node[vertex] at (P1_3) {};
                \node[vertex] at (P2_1) {};
                \node[vertex] at (P2_2) {};
                \node[vertex] at (P2_3) {};
            \end{tikzpicture}%
        }
         \caption{}
    \end{subfigure}\hfill
    \begin{subfigure}[b]{0.32\textwidth}
        \centering
        \resizebox{\linewidth}{!}{%
            \begin{tikzpicture}
                \coordinate (H1_T) at (0, 1.5);
                \coordinate (H1_B) at (0, -1.5);
                \coordinate (H2_T) at (3, 1.5);
                \coordinate (H2_B) at (3, -1.5);

                \coordinate (P1_1) at (-1.5, 1.2);
                \coordinate (P1_2) at (-2.5, 0);
                \coordinate (P1_3) at (-1.5, -1.5);
                \coordinate (P2_1) at (4.5, 1.2);
                \coordinate (P2_2) at (5.5, 0.5);
                \coordinate (P2_3) at (4.5, -0.5);

                \filldraw[face1] (H1_T) -- (H1_B) -- (P1_1) -- cycle;
                \filldraw[face1] (H1_T) -- (H1_B) -- (P1_2) -- cycle;
                \filldraw[face1] (H1_T) -- (H1_B) -- (P1_3) -- cycle;
                \filldraw[bridge] (H1_T) -- (H1_B) -- (H2_T) -- cycle;
                \filldraw[bridge] (H2_T) -- (H2_B) -- (H1_B) -- cycle;
                \filldraw[face1] (H2_T) -- (H1_B) -- (P2_1) -- cycle;
                \filldraw[face1] (H2_T) -- (H1_B) -- (P2_2) -- cycle;
                \filldraw[face1] (H2_T) -- (H1_B) -- (P2_3) -- cycle;

                \draw[hinge] (H1_T) -- (H1_B);
                \draw[hinge] (H2_T) -- (H2_B);
                \draw[thick, purple!80!black, opacity=0.8] (H1_B) -- (H2_T);

                \node[vertex] at (H1_T) {};
                \node[vertex] at (H1_B) {};
                \node[vertex] at (H2_T) {};
                \node[vertex] at (H2_B) {};
                \node[vertex] at (P1_1) {};
                \node[vertex] at (P1_2) {};
                \node[vertex] at (P1_3) {};
                \node[vertex] at (P2_1) {};
                \node[vertex] at (P2_2) {};
                \node[vertex] at (P2_3) {};
            \end{tikzpicture}%
        }
         \caption{}
    \end{subfigure}\hfill
    \begin{subfigure}[b]{0.32\textwidth}
        \centering
        \resizebox{\linewidth}{!}{%
            \begin{tikzpicture}
                \coordinate (H1_T) at (0, 1.5);
                \coordinate (H1_B) at (0, -1.5);
                \coordinate (H2_T) at (3, 1.5);
                \coordinate (H2_B) at (3, -1.5);

                \coordinate (P1_1) at (-1.5, 1.2);
                \coordinate (P1_2) at (-2.5, 0);
                \coordinate (P1_3) at (-1.5, -1.5);
                \coordinate (P2_1) at (4.5, 1.2);
                \coordinate (P2_2) at (5.5, 0.5);
                \coordinate (P2_3) at (4.5, -0.5);

                \filldraw[face1] (H1_T) -- (H1_B) -- (P1_1) -- cycle;
                \filldraw[face1] (H1_T) -- (H1_B) -- (P1_2) -- cycle;
                \filldraw[face1] (H1_T) -- (H1_B) -- (P1_3) -- cycle;
                \filldraw[bridge] (H1_T) -- (H1_B) -- (H2_T) -- cycle;
                \filldraw[bridge] (H2_T) -- (H2_B) -- (H1_B) -- cycle;
                \filldraw[face1] (H2_B) -- (H1_B) -- (P2_1) -- cycle;
                \filldraw[face1] (H2_B) -- (H1_B) -- (P2_2) -- cycle;
                \filldraw[face1] (H2_B) -- (H1_B) -- (P2_3) -- cycle;

                \draw[hinge] (H1_T) -- (H1_B);
                \draw[hinge] (H2_T) -- (H2_B);
                \draw[thick, purple!80!black, opacity=0.8] (H1_B) -- (H2_T);

                \node[vertex] at (H1_T) {};
                \node[vertex] at (H1_B) {};
                \node[vertex] at (H2_T) {};
                \node[vertex] at (H2_B) {};
                \node[vertex] at (P1_1) {};
                \node[vertex] at (P1_2) {};
                \node[vertex] at (P1_3) {};
                \node[vertex] at (P2_1) {};
                \node[vertex] at (P2_2) {};
                \node[vertex] at (P2_3) {};
            \end{tikzpicture}%
        }
        \caption{}
    \end{subfigure}

    \vspace{0.5cm} 

    \begin{subfigure}[b]{0.32\textwidth}
        \centering
        \resizebox{\linewidth}{!}{%
            \begin{tikzpicture}
                \coordinate (H1_T) at (0, 1.5);
                \coordinate (H1_B) at (0, -1.5);
                \coordinate (H2_T) at (3, 1.5);
                \coordinate (H2_B) at (1.5, 3);

                \coordinate (P1_1) at (-1.5, 1.2);
                \coordinate (P1_2) at (-2.5, 0);
                \coordinate (P1_3) at (-1.5, -1.5);
                \coordinate (P2_1) at (2.5, 3.5);
                \coordinate (P2_2) at (3.1, 3.0);
                \coordinate (P2_3) at (3.5, 2.5);

                \filldraw[face1] (H1_T) -- (H1_B) -- (P1_1) -- cycle;
                \filldraw[face1] (H1_T) -- (H1_B) -- (P1_2) -- cycle;
                \filldraw[face1] (H1_T) -- (H1_B) -- (P1_3) -- cycle;
                \filldraw[bridge] (H1_T) -- (H1_B) -- (H2_T) -- cycle;
                \filldraw[bridge] (H2_T) -- (H2_B) -- (H1_T) -- cycle;
                \filldraw[face1] (H2_T) -- (H2_B) -- (P2_1) -- cycle;
                \filldraw[face1] (H2_T) -- (H2_B) -- (P2_2) -- cycle;
                \filldraw[face1] (H2_T) -- (H2_B) -- (P2_3) -- cycle;

                \draw[hinge] (H1_T) -- (H1_B);
                \draw[hinge] (H2_T) -- (H2_B);
                \draw[thick, purple!80!black, opacity=0.8] (H1_B) -- (H2_T);

                \node[vertex] at (H1_T) {};
                \node[vertex] at (H1_B) {};
                \node[vertex] at (H2_T) {};
                \node[vertex] at (H2_B) {};
                \node[vertex] at (P1_1) {};
                \node[vertex] at (P1_2) {};
                \node[vertex] at (P1_3) {};
                \node[vertex] at (P2_1) {};
                \node[vertex] at (P2_2) {};
                \node[vertex] at (P2_3) {};
            \end{tikzpicture}%
        }
        \caption{}
    \end{subfigure}\hfill
    \begin{subfigure}[b]{0.32\textwidth}
        \centering
        \resizebox{\linewidth}{!}{%
            \begin{tikzpicture}
                \coordinate (H1_T) at (0, 1.5);
                \coordinate (H1_B) at (0, -1.5);
                \coordinate (H2_T) at (3, 1.5);
                \coordinate (H2_B_new) at (1.5, 3.0);

                \coordinate (P1_1) at (-1.5, 1.2);
                \coordinate (P1_2) at (-2.5, 0);
                \coordinate (P1_3) at (-1.5, -1.5);
                \coordinate (P2_1_new) at (-0.6, 2.4);
                \coordinate (P2_2_new) at (-0.5, 3.5);
                \coordinate (P2_3_new) at (0.75, 3.75);

                \filldraw[face1] (H1_T) -- (H1_B) -- (P1_1) -- cycle;
                \filldraw[face1] (H1_T) -- (H1_B) -- (P1_2) -- cycle;
                \filldraw[face1] (H1_T) -- (H1_B) -- (P1_3) -- cycle;
                \filldraw[bridge] (H1_T) -- (H1_B) -- (H2_T) -- cycle;
                \filldraw[bridge] (H1_T) -- (H2_B_new) -- (H2_T) -- cycle;
                \filldraw[face1] (H1_T) -- (H2_B_new) -- (P2_1_new) -- cycle;
                \filldraw[face1] (H1_T) -- (H2_B_new) -- (P2_2_new) -- cycle;
                \filldraw[face1] (H1_T) -- (H2_B_new) -- (P2_3_new) -- cycle;

                \draw[hinge] (H1_T) -- (H1_B);
                \draw[hinge] (H1_T) -- (H2_B_new);
                \draw[thick, purple!80!black, opacity=0.8] (H1_T) -- (H2_T);

                \node[vertex] at (H1_T) {};
                \node[vertex] at (H1_B) {};
                \node[vertex] at (H2_T) {};
                \node[vertex] at (H2_B_new) {};
                \node[vertex] at (P1_1) {};
                \node[vertex] at (P1_2) {};
                \node[vertex] at (P1_3) {};
                \node[vertex] at (P2_1_new) {};
                \node[vertex] at (P2_2_new) {};
                \node[vertex] at (P2_3_new) {};
            \end{tikzpicture}%
        }
        \caption{}
        \label{fig:sub5}
    \end{subfigure}\hfill
    \begin{subfigure}[b]{0.32\textwidth}
        \centering
        \resizebox{\linewidth}{!}{%
            \begin{tikzpicture}
                \coordinate (H1_T) at (0, 1.5);
                \coordinate (H1_B) at (0, -1.5);
                \coordinate (H2_T) at (3, 1.5);
                \coordinate (H2_B_new) at (1.5, 3.0);

                \coordinate (P1_1) at (-1.5, 1.2);
                \coordinate (P1_2) at (-2.5, 0);
                \coordinate (P1_3) at (-1.5, -1.5);
                \coordinate (P2_1_new) at (-0.6, 2.4);
                \coordinate (P2_2_new) at (-0.5, 2.8);
                \coordinate (P2_3_new) at (0.5, 3);

                \filldraw[face1] (H1_T) -- (H1_B) -- (P1_1) -- cycle;
                \filldraw[face1] (H1_T) -- (H1_B) -- (P1_2) -- cycle;
                \filldraw[face1] (H1_T) -- (H1_B) -- (P1_3) -- cycle;
                \filldraw[bridge] (H1_T) -- (H1_B) -- (H2_T) -- cycle;
                \filldraw[bridge] (H1_T) -- (H2_B_new) -- (H2_T) -- cycle;
                \filldraw[face1] (H1_T) -- (H2_T) -- (P2_1_new) -- cycle;
                \filldraw[face1] (H1_T) -- (H2_T) -- (P2_2_new) -- cycle;
                \filldraw[face1] (H1_T) -- (H2_T) -- (P2_3_new) -- cycle;

                \draw[hinge] (H1_T) -- (H1_B);
                \draw[hinge] (H1_T) -- (H2_B_new);
                \draw[thick, purple!80!black, opacity=0.8] (H1_T) -- (H2_T);

                \node[vertex] at (H1_T) {};
                \node[vertex] at (H1_B) {};
                \node[vertex] at (H2_T) {};
                \node[vertex] at (H2_B_new) {};
                \node[vertex] at (P1_1) {};
                \node[vertex] at (P1_2) {};
                \node[vertex] at (P1_3) {};
                \node[vertex] at (P2_1_new) {};
                \node[vertex] at (P2_2_new) {};
                \node[vertex] at (P2_3_new) {};
            \end{tikzpicture}%
        }
        \caption{}
        \label{fig:sub6}
    \end{subfigure}

    \vspace{0.5cm} 

    \begin{subfigure}[b]{0.32\textwidth}
        \centering
        \resizebox{\linewidth}{!}{%
            \begin{tikzpicture}
                \coordinate (H1_T) at (0, 1.5);
                \coordinate (H1_B) at (0, -1.5);
                \coordinate (H2_T) at (3, 1.5);
                \coordinate (H2_B) at (2., -3);

                \coordinate (P1_1) at (-1.5, 1.2);
                \coordinate (P1_2) at (-2.5, 0);
                \coordinate (P1_3) at (-1.5, -1.5);
                \coordinate (P2_1) at (3.5, -0.5);
                \coordinate (P2_2) at (3.5, 0.3);
                \coordinate (P2_3) at (2.9, 0.9);

                \filldraw[face1] (H1_T) -- (H1_B) -- (P1_1) -- cycle;
                \filldraw[face1] (H1_T) -- (H1_B) -- (P1_2) -- cycle;
                \filldraw[face1] (H1_T) -- (H1_B) -- (P1_3) -- cycle;
                \filldraw[bridge] (H1_T) -- (H1_B) -- (H2_T) -- cycle;
                \filldraw[bridge] (H1_T) -- (H1_B) -- (H2_B) -- cycle;
                \filldraw[face1] (H1_T) -- (H2_B) -- (P2_1) -- cycle;
                \filldraw[face1] (H1_T) -- (H2_B) -- (P2_2) -- cycle;
                \filldraw[face1] (H1_T) -- (H2_B) -- (P2_3) -- cycle;

                \draw[hinge] (H1_T) -- (H1_B);
                \draw[thick, purple!80!black, opacity=0.8] (H1_B) -- (H2_T);

                \node[vertex] at (H1_T) {};
                \node[vertex] at (H1_B) {};
                \node[vertex] at (H2_T) {};
                \node[vertex] at (H2_B) {};
                \node[vertex] at (P1_1) {};
                \node[vertex] at (P1_2) {};
                \node[vertex] at (P1_3) {};
                \node[vertex] at (P2_1) {};
                \node[vertex] at (P2_2) {};
                \node[vertex] at (P2_3) {};
            \end{tikzpicture}%
        }
        \caption{}
        \label{fig:sub7}
    \end{subfigure}\hfill
    \begin{subfigure}[b]{0.32\textwidth}
        \centering
        \resizebox{\linewidth}{!}{%
            \begin{tikzpicture}
                \coordinate (H1_T) at (0, 1.5);
                \coordinate (H1_B) at (0, -1.5);
                \coordinate (H2_T) at (3, 1.5);
                \coordinate (H2_B) at (2., -3);

                \coordinate (P1_1) at (-1.5, 1.2);
                \coordinate (P1_2) at (-2.5, 0);
                \coordinate (P1_3) at (-1.5, -1.5);
                \coordinate (P2_1) at (3.3, -2.5);
                \coordinate (P2_2) at (3, -1);
                \coordinate (P2_3) at (3.7, -2);

                \filldraw[face1] (H1_T) -- (H1_B) -- (P1_1) -- cycle;
                \filldraw[face1] (H1_T) -- (H1_B) -- (P1_2) -- cycle;
                \filldraw[face1] (H1_T) -- (H1_B) -- (P1_3) -- cycle;
                \filldraw[bridge] (H1_T) -- (H1_B) -- (H2_T) -- cycle;
                \filldraw[bridge] (H1_T) -- (H1_B) -- (H2_B) -- cycle;
                \filldraw[face1] (H1_B) -- (H2_B) -- (P2_1) -- cycle;
                \filldraw[face1] (H1_B) -- (H2_B) -- (P2_2) -- cycle;
                \filldraw[face1] (H1_B) -- (H2_B) -- (P2_3) -- cycle;

                \draw[hinge] (H1_T) -- (H1_B);
                \draw[thick, purple!80!black, opacity=0.8] (H1_B) -- (H2_T);

                \node[vertex] at (H1_T) {};
                \node[vertex] at (H1_B) {};
                \node[vertex] at (H2_T) {};
                \node[vertex] at (H2_B) {};
                \node[vertex] at (P1_1) {};
                \node[vertex] at (P1_2) {};
                \node[vertex] at (P1_3) {};
                \node[vertex] at (P2_1) {};
                \node[vertex] at (P2_2) {};
                \node[vertex] at (P2_3) {};
            \end{tikzpicture}%
        }
        \caption{}
        \label{fig:sub8}
    \end{subfigure}\hfill
    \begin{subfigure}[b]{0.32\textwidth}
        \centering
        \resizebox{\linewidth}{!}{%
            \begin{tikzpicture}
                \coordinate (H1_T) at (0, 1.5);
                \coordinate (H1_B) at (0, -1.5);
                \coordinate (H2_T) at (3, 1.5);
                \coordinate (H2_B) at (2., -3);

                \coordinate (P1_1) at (-1.5, 1.2);
                \coordinate (P1_2) at (-2.5, 0);
                \coordinate (P1_3) at (-1.5, -1.5);
                \coordinate (P2_1) at (3.3, -2);
                \coordinate (P2_2) at (3, 0.5);
                \coordinate (P2_3) at (3.7, -0.1);

                \filldraw[face1] (H1_T) -- (H1_B) -- (P1_1) -- cycle;
                \filldraw[face1] (H1_T) -- (H1_B) -- (P1_2) -- cycle;
                \filldraw[face1] (H1_T) -- (H1_B) -- (P1_3) -- cycle;
                \filldraw[bridge] (H1_T) -- (H1_B) -- (H2_T) -- cycle;
                \filldraw[bridge] (H1_T) -- (H1_B) -- (H2_B) -- cycle;
                \filldraw[face1] (H1_B) -- (H1_T) -- (P2_1) -- cycle;
                \filldraw[face1] (H1_B) -- (H1_T) -- (P2_2) -- cycle;
                \filldraw[face1] (H1_B) -- (H1_T) -- (P2_3) -- cycle;

                \draw[hinge] (H1_T) -- (H1_B);
                \draw[thick, purple!80!black, opacity=0.8] (H1_B) -- (H2_T);

                \node[vertex] at (H1_T) {};
                \node[vertex] at (H1_B) {};
                \node[vertex] at (H2_T) {};
                \node[vertex] at (H2_B) {};
                \node[vertex] at (P1_1) {};
                \node[vertex] at (P1_2) {};
                \node[vertex] at (P1_3) {};
                \node[vertex] at (P2_1) {};
                \node[vertex] at (P2_2) {};
                \node[vertex] at (P2_3) {};
            \end{tikzpicture}%
        }
        \caption{}
        \label{fig:sub9}
    \end{subfigure}

    \caption{All 9 embedding simplicial complexes of $G_\pi$ into $J(10, 3, 2)$ in \cref{ex:flowers}.}
    \label{fig:3x3_grid}
\end{figure}
\end{example}

\begin{proof}[Proof of \cref{prop:moments}]
According to \cref{lemma:even_moments_limit} and \cref{lemma:articulation},
we have the following expression of the limit of even moments:
\begin{equation}\label{eq:loop_articulation}
\lim_{n\to \infty}m_{2r}^{(n)}
=
(\eta_2)^r\sum_{\pi\in \mathcal{P}_2(2r)}(q+2)^{\#\loops(H_\pi)+\#\art(G_\pi)}.
\end{equation}

We notice that an adjacent isolated chord in $\pi$ contributes to a loop in $H_\pi$,
and a non-adjacent isolated chord in $\pi$ contributes to an articulation vertex in $G_\pi$,
we then have 
\[
\#\ic(\pi)=\#\loops(H_\pi) + \#\art(G_\pi).
\]
hence we have the following equality:
\begin{equation}\label{eq:even_moments_isolate_chord}
    \lim_{n\to\infty}m_{2r}^{(n)}=
    (\eta_2)^r \sum\limits_{\pi\in \mathcal{P}_2(2r)}(q+2)^{\# \ic(\pi)},
\end{equation}
and according to \cref{thm:moments},
we have 
\[
 \lim_{n\to\infty}m_{2r}^{(n)}=m_{2r}.
\]
\end{proof}

\subsection{Variance of the moments}\label{sec:variance}

\begin{proposition}[Variance]\label{prop:variance}
For each $k\geq 1$ we have 
\begin{equation}\label{eq:variance}
\Var\left[\frac{1}{N_q}\tr(W_n^k)\right]
=O\left(\frac{1}{n^{q+1}}\right).
\end{equation}
\end{proposition}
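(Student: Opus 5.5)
We expand the variance as a sum over pairs of index sequences and show that only "correlated" pairs survive. Writing $X_\tau=\xi_\tau-p$, we have
\[
\Var\left[\tfrac{1}{N_q}\tr(W_n^k)\right]
=\frac{1}{N_q^2 n^{k}}\sum_{\substack{\tau_1,\dots,\tau_k\\ \tau_1',\dots,\tau_k'}}
\Bigl(\EE\Bigl[\prod_{i}X_{\tau_i}\prod_{i}X_{\tau'_i}\Bigr]-\EE\Bigl[\prod_i X_{\tau_i}\Bigr]\EE\Bigl[\prod_i X_{\tau'_i}\Bigr]\Bigr)
\tr\Bigl(\prod_i b_{\tau_i}b_{\tau_i}^\top\Bigr)\tr\Bigl(\prod_i b_{\tau'_i}b_{\tau'_i}^\top\Bigr).
\]
By independence of the $\xi_\tau$, the covariance bracket vanishes unless the two index sets $\{\tau_i\}$ and $\{\tau'_i\}$ share at least one simplex. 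It also vanishes if some simplex appears exactly once in the concatenated word of length $2k$. As in \cref{eq:moments_pair}, we group terms by a partition $\pi\in\mathcal{P}_{\geq 2}(2k)$ of the $2k$ positions. We keep only those $\pi$ having at least one block that meets both halves $\{1,\dots,k\}$ and $\{k+1,\dots,2k\}$. For each such $\pi$, the factor $\prod\eta_{|S|}$ and the products of inner products $b^\top b$ are $O(1)$, exactly as in the proof of \cref{lemma:odd_moments}. So it suffices to count the admissible tuples of distinct simplices.

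For the counting step we repeat the Johnson graph walk argument. A nonzero trace forces each half to be a closed walk in $J(n,q+2,q+1)$. The two walks share a simplex, so the union of the $|\pi|$ distinct simplices is connected. Choosing a starting simplex costs $\binom{n}{q+2}=O(n^{q+2})$, and each further new simplex costs $O(n)$. The number of tuples is therefore $O(n^{q+2+|\pi|-1})=O(n^{q+1+|\pi|})$. Since $\pi$ has no singletons, $|\pi|\le k$, and each term contributes
\[
\frac{O(n^{q+1+|\pi|})}{N_q^2 n^{k}}=O\bigl(n^{|\pi|-k-(q+1)}\bigr)\le O\bigl(n^{-(q+1)}\bigr),
\]
using $N_q\sim n^{q+1}/(q+1)!$. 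There are finitely many $\pi$ for fixed $k$, which gives \cref{eq:variance}.

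The main obstacle is justifying the single connectivity saving, i.e. that the coupling between the two traces really merges two would-be independent walks into one connected structure. Without coupling, the count would be $O(n^{2(q+1)+|\pi|})$, and that term exactly cancels in the variance. We must check that every surviving $\pi$ identifies at least one simplex across the halves, and that after this identification the second walk need not pay a fresh $O(n^{q+2})$ for its starting point: it starts at or near a shared simplex and costs only $O(n)$ per new simplex. I would handle this by ordering the distinct simplices so that the first walk is explored first. Then a simplex shared with the second walk is visited, and from it the second walk is explored in both directions along its cycle. A careful version would also need to treat blocks of size $\ge 3$ and repeated simplices uniformly. This is exactly why we bound by $|\pi|\le k$ rather than seeking the sharper pairing-only count.
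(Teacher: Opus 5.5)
Your proof is correct and follows essentially the same route as the paper. Both arguments expand the covariance and observe that a nonzero term needs a simplex shared by the two traces, with every simplex occurring at least twice. The resulting $u\le k$ distinct simplices then form a connected set in $J(n,q+2,q+1)$, and a rooted-spanning-tree count bounds their number by $O(n^{q+1+u})$.

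The one organizational difference is cosmetic. You index terms by partitions $\pi\in\mathcal{P}_{\geq 2}(2k)$ meeting both halves, whereas the paper counts the $u$ distinct simplices first and then bounds the $u^{2k}$ assignments to positions. The obstacle you raise in your last paragraph is not a real gap: your own connectivity-plus-spanning-tree count already handles it, uniformly in block sizes.
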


\begin{proof}
Let 
\[\boldsymbol{\tau}=(\tau_1,\cdots, \tau_k)\in \Delta_{q+1}^k
\]
be a $k$-tuple of $(q+1)$-simplices,
and Let $S_{\boldsymbol{\tau}}$ be the set of distinct $(q+1)$-simplices in the tuple $\boldsymbol{\tau}$.
We define 
\[
\widehat{\xi}_{\boldsymbol{\tau}}:=\prod_{i=1}^k(\xi_{\tau_{i}}-p),
\]
and 
\[
\alpha(\boldsymbol{\tau}):=\tr\left(\prod_{i=1}^kb_{\tau_i}b_{\tau_i}^\top\right).
\]

we express the normalized trace as
\begin{equation}\label{eq:covariance}
\Var\left[\frac{1}{N_q}(\tr(W_n^k))\right]
=
\frac{1}{N_q^2 n^{k}}\sum_{\boldsymbol{\tau},\boldsymbol{\sigma}\in \Delta_{q+1}^k}\alpha(\boldsymbol{\tau})\alpha(\boldsymbol{\sigma})\times\Cov[\widehat{\xi}_{\boldsymbol{\tau}}, \widehat{\xi}_{\boldsymbol{\sigma}}].
\end{equation}
We observe that for each term
\[
\alpha(\boldsymbol{\tau})\alpha(\boldsymbol{\sigma})\times\Cov[\widehat{\xi}_{\boldsymbol{\tau}}, \widehat{\xi}_{\boldsymbol{\sigma}}]=O(1),
\]
hence we only need to count how many combinations of $\boldsymbol{\tau}, \boldsymbol{\sigma}\in \Delta_{q+1}^{k}$ can contribute to the sum in \cref{eq:covariance}.
Now suppose 
\[
\alpha(\boldsymbol{\tau})\alpha(\boldsymbol{\sigma})\times\Cov[\widehat{\xi}_{\boldsymbol{\tau}}, \widehat{\xi}_{\boldsymbol{\sigma}}]\neq 0.
\]
Then $\alpha(\boldsymbol{\tau})\neq 0$ implies that 
\[
\tau_1-\tau_2-\cdots -\tau_k-\tau_1
\]
forms a closed walk in the Johnson graph $J(n, q+2, q+1)$,
and similar to $\alpha(\boldsymbol{\sigma})$.

On the other hand,
by the definition of covariance
\[\Cov[\widehat{\xi}_{\boldsymbol{\tau}}, \widehat{\xi}_{\boldsymbol{\sigma}}]
=\EE[\widehat{\xi}_{\boldsymbol{\tau}} \widehat{\xi}_{\boldsymbol{\sigma}}] - \EE[\widehat{\xi}_{\boldsymbol{\tau}}]\times \EE[\widehat{\xi}_{\boldsymbol{\sigma}}],
\]
$\Cov[\widehat{\xi}_{\boldsymbol{\tau}}, \widehat{\xi}_{\boldsymbol{\sigma}}]\neq 0$
implies that $S_{\boldsymbol{\tau}}\cap S_{\boldsymbol{\sigma}}\neq\emptyset$,
if not,
we would have $\EE[\widehat{\xi}_{\boldsymbol{\tau}} \widehat{\xi}_{\boldsymbol{\sigma}}] = \EE[\widehat{\xi}_{\boldsymbol{\tau}}]\times \EE[\widehat{\xi}_{\boldsymbol{\sigma}}]$,
this is contrasted with our requirement that $\Cov[\widehat{\xi}_{\boldsymbol{\tau}}, \widehat{\xi}_{\boldsymbol{\sigma}}]\neq 0$. 
More than that,
we must have that each distinct $(q+1)$-simplex occurs at least twice in the total $2k$ positions.
If not,
suppose for some $(q+1)$-simplex $\rho\in S_{\boldsymbol{\tau}}\cup S_{\boldsymbol{\sigma}}$ occurs exactly once.
Assume without loss of generality,
that $\rho\in S_{\boldsymbol{\tau}}$,
then $\EE[\widehat{\xi}_{\boldsymbol{\tau}} \widehat{\xi}_{\boldsymbol{\sigma}}]= \EE[\widehat{\xi}_{\boldsymbol{\tau}}]=0$,
this is contradicted with our requirement.

Hence,
let $u=|S_{\boldsymbol{\tau}}\cup S_{\boldsymbol{\sigma}}|$ denote the number of distinct $(q+1)$-simplices occurring in the union of $S_{\boldsymbol{\tau}}$ and $S_{\boldsymbol{\sigma}}$.
We must have that 
\[
u\leq k.
\]
Moreover,
since $S_{\boldsymbol{\tau}}$ and $S_{\boldsymbol{\sigma}}$ are connected,
and $S_{\boldsymbol{\tau}}\cap S_{\boldsymbol{\sigma}}\neq \emptyset$,
we must have the set $S_{\boldsymbol{\tau}}\cup S_{\boldsymbol{\sigma}}$ in the Johnson graph $J(n, q+2, q+1)$ are connected.

We now choose a rooted spanning tree of these $u$ distinct vertices,
there are $\binom{n}{q+2}$ options and every remaining tree vertex is an upper neighbor of an already chosen parent,
hence there are $O(n)$ options,
in total,
there are 
\[
\binom{n}{q+2}\times O(n^{u-1})=O(n^{q+u+1})
\]
options to fix the $u$ distinct simplices.

We then need to assign them into the $2k$ positions:
\[
\tau_1\cdots, \tau_k, \sigma_1,\cdots, \sigma_k,
\]
there are at most $u^{2k}\leq k^{2k}=O(1)$ assignments.
Therefore,
\[
\Var\left[\frac{1}{N_q}(\tr(W_n^k))\right]
=
\frac{1}{N_q^2 n^{k}}O(n^{q+u+1})=O(\frac{1}{n^{q+k-u+1}}),
\]
finally since $u\leq k$,
the \cref{eq:variance} is proved.
\end{proof}

\subsection{Proof of Theorem \ref{thm:main}}\label{sec:main_thm_proof}

Before giving the final proof of \cref{thm:main},
we still need to show that the probability measure $\mu_{\lap}$ is \emph{determined by moments},
i.e.,
    if $\nu$ is another probability measure having the same moments,
    then $\nu=\mu_{\lap}$.

\begin{lemma}
  The probability measure 
    \[
    \mu_{\lap}=
    \mathcal N\bigl(0,\eta_2\bigr)
\boxplus
\operatorname{SC}\bigl((q+1)\eta_2\bigr)
    \] is determined by moments.  
\end{lemma}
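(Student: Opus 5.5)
We will verify Carleman's condition for $\mu_{\lap}$. That is, writing $m_k$ for the moments of $\mu_{\lap}$, we show
\[
\sum_{r\geq 1} m_{2r}^{-1/(2r)}=\infty .
\]
By the classical Carleman criterion this implies that $\mu_{\lap}$ is determined by its moments. By \cref{thm:moments}, applied with $\sigma^2=\eta_2$ and $s=q+1$ (together with \cref{prop:moments}, which confirms these are the moments in question), the odd moments vanish and
\[
m_{2r}=\eta_2^{r}\sum_{\pi\in\mathcal{P}_2(2r)}(q+2)^{\#\ic(\pi)} .
\]
So the whole task reduces to bounding this sum from above.

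The key estimate is elementary. Every pairing has at most $r$ chords, so $\#\ic(\pi)\leq r$. Also $|\mathcal{P}_2(2r)|=(2r-1)!!$. Hence
\[
m_{2r}\leq \bigl((q+2)\eta_2\bigr)^{r}(2r-1)!!\leq \bigl(2(q+2)\eta_2\bigr)^{r}\, r^{r}.
\]
This is simply the Gaussian moment growth with an inflated variance $(q+2)\eta_2$. Taking $2r$-th roots gives $m_{2r}^{1/(2r)}\leq C\sqrt{r}$, with $C=\sqrt{2(q+2)\eta_2}$. Therefore
\[
\sum_{r\geq 1} m_{2r}^{-1/(2r)}\geq \sum_{r\geq 1}\frac{1}{C\sqrt r}=\infty ,
\]
and Carleman's condition holds.

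There is an alternative route that avoids the combinatorial formula. The free cumulants of $\mu_{\lap}$ are bounded by those of $\mathcal N(0,\eta_2)$ plus a single $\kappa_2$ term. Moments built from non-crossing sums are dominated by moments built from all-partition sums with the same (nonnegative) cumulants, so $m_{2r}$ is at most the $2r$-th Gaussian moment of variance $(q+2)\eta_2$. This gives the same bound.

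The only point needing care is that \cref{thm:moments} was proved as an identity of formal moment sequences, via free cumulants. We must make sure that $\mu_{\lap}$ is a genuine probability measure whose moments are exactly these numbers. This holds because the free convolution of two compactly or Gaussian-tailed measures with finite moments of all orders has moments given by the additive free cumulant formula. Once that is granted, the Carleman bound above completes the proof, and I expect no real obstacle beyond this remark.
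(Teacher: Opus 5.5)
Your proposal is correct and is essentially the paper's proof. Both verify Carleman's condition from the pairing formula of \cref{thm:moments} with $s=q+1$: using $\#\ic(\pi)\le r$ and $|\mathcal{P}_2(2r)|=(2r-1)!!\le(2r)^r$, one gets $m_{2r}^{-1/(2r)}\ge \bigl(2(q+2)\eta_2 r\bigr)^{-1/2}$, and the sum diverges.

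Your alternative aside is not needed, and as phrased its key inference fails. An all-partition sum with these nonnegative cumulants is not a Gaussian moment, because the blocks of size $\ge 4$ still contribute positive terms. What does give the bound $((q+2)\eta_2)^r(2r-1)!!$ is a termwise comparison inside the non-crossing sum: use $\kappa_{2j}\le (q+2)^j\kappa_{2j}\bigl(\mathcal N(0,\eta_2)\bigr)$, then sum over non-crossing partitions to recover $(q+2)^r$ times the Gaussian moment $\eta_2^r(2r-1)!!$.
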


\begin{proof}
    Since
    \[
    m_{2r}=\eta_2^r\sum_{\pi\in \mathcal{P}_2(2r)}(q+2)^{\#\ic(\pi)},
    \]
    we obtain that
    \[
    m_{2r} \leq \eta_2^r (q+2)^r(2r-1)!!,
    \]
    and since 
    \[
    (2r-1)!! \leq (2r)^r,
    \]
    thus
    \[
    m_{2r}^{-\frac{1}{2r}} \geq \frac{1}{\sqrt{2\eta_2 (q+2)}}r^{-\frac{1}{2}}.
    \]
    Consequently,
    $\sum_{r=1}^\infty m_{2r}^{-\frac{1}{2r}}$ is divergent,
    hence Carleman's condition holds and the probability measure $\mu_{\lap}$ is determined by moments.
\end{proof}

\begin{proof}[Proof of \cref{thm:main}]
When $q=0$,
the result follows from \cite{MR2759729}.
Hence we assume $q \geq 1$.
According to Chebyshev's inequality,
for fixed $k\geq 1$ and $\epsilon > 0$,
\[
\Pr\left(\left| \frac{1}{N_q}\tr(W_n^k)-m_k^{(n)}\right| \geq \epsilon\right) \leq \frac{\Var\left[\frac{1}{N_q}\tr(W_n^k)\right]}{\epsilon^2}=O\left(\frac{1}{n^{q+1}}\right),
\]
hence for $q\geq 1$
\[
\sum_{n} \Pr\left(\left| \frac{1}{N_q}\tr(W_n^k)-m_k^{(n)}\right| \geq \epsilon\right) < \infty,
\]
and according to Borel–Cantelli lemma,
we have
\[
\frac{1}{N_q}\tr(W_n^k) - m_k^{(n)}  \xrightarrow{n\to \infty} 0 \quad \textrm{almost surely}.
\]
In addition to that,
since 
\[
\lim_{n\to\infty}m_k^{(n)}=m_k,
\]
hence 
\[
\frac{1}{N_q}\tr(W_n^k) \xrightarrow{n\to \infty} m_k \quad \textrm{almost surely}.
\]
Let $\Omega_k$ be the event when $\frac{1}{N_q}\tr(W_n^k)$ converges to $m_k$,
then $\Pr(\Omega_k)=1$.
Let $\Omega=\bigcap_{k\geq 1} \Omega_k$.
Since $\Omega$ is a countable intersection,
hence $\Pr(\Omega)=1$.
On this event $\Omega$,
applying the Fréchet–Shohat theorem (the fundamental theorem of the moment method),
if all the moments of a sequence of probability measures converge to the moments of a probability measure which is determined by moments,
then the sequence of measures converges weakly to that measure.
Therefore,
\cref{eq:free_conv} is proved.
\end{proof}

\bibliography{sample}

\end{document}